\documentclass{amsart}

\usepackage{xcolor}
\usepackage{mathtools}
\usepackage{amsmath}
\usepackage{amsthm}
\usepackage{graphicx}
\usepackage{amssymb}
\usepackage{epstopdf}
\usepackage{nicefrac}
\usepackage{mathrsfs}
\usepackage{hyperref}
\usepackage{enumitem}
\usepackage{tikz}
\usepackage{caption}
\usepackage{subcaption}
\usepackage{cite}
\usetikzlibrary{calc,arrows.meta,decorations.pathreplacing}

\theoremstyle{plain}
\newtheorem{theo}{Theorem}[section]
\newtheorem{lm}[theo]{Lemma}
\newtheorem{cor}[theo]{Corollary}
\newtheorem{prop}[theo]{Proposition}

\newtheorem*{ack}{Acknowledgements}

\newtheorem*{example*}{Example}

\DeclareMathOperator{\conv}{conv}

\DeclareMathOperator{\Ha}{H}

\DeclareMathOperator{\PW}{PW}
\DeclareMathOperator{\supp}{supp}

\DeclareMathOperator{\Image}{Im}

\title[Helson inequality and Hankel operators for Paley-Wiener spaces]{Helson Inequality, Hankel Operators, and Weak Factorization on Paley-Wiener spaces of Convex Domains}
\author[K. Bampouras]{Konstantinos Bampouras}
\address{Department of Mathematics and Systems Analysis, School of Science, Aalto University, Espoo, Finland}
\email{konstantinos.bampouras@aalto.fi}

\date{\today}

\begin{document}
	
	\begin{abstract}
	For any convex set $\Omega\subset\mathbb{R}^n$ that does not contain affine lines, we prove the inequality
	$$\int_\Omega\frac{|\hat{f}(x)|^2}{\omega_\Omega(x)}\,dx\leq C(n)\|f\|_{L^1}^2,\quad \supp\hat{f}\subset\Omega,$$
	where $\omega_\Omega(x)=m(\Omega\cap(2x-\Omega))$. As a consequence, we derive a weak factorization for
	$$\PW^1(\Omega)=\{f\in L^1(\mathbb{R}^n):\supp\hat{f}\subset\Omega\}.$$
	Furthermore, we establish a complete characterization of Schatten class Hankel operators for polyhedra for all $1\leq p<\infty,$ extending the already known $1\leq p\leq 2$  range.
	
	\end{abstract}
	
	\maketitle

\section{Introduction}
Let $\Omega$ be an open subset of $\mathbb{R}^n.$ The Paley-Wiener spaces associated with $\Omega$ are defined as
$$\PW^p(\Omega)=\{f\in L^p(\mathbb{R}^n):\supp\hat{f}\subset\Omega\}, \quad 1\leq p\leq \infty,$$
endowed with the $L^p$ norm. When $p=2$ we may also write $\PW(\Omega):=\PW^2(\Omega)$. The case $\Omega=\mathbb{R}^n_+$ coincides with the classical multivariable Hardy spaces $\PW^p(\mathbb{R}^n_+)\cong H^p(\mathbb{C}_+^n)$. For Hardy spaces, H. Helson \cite{MR2263964} proved that
$$\int_{\mathbb{R}^n_+}\dfrac{|\hat{f}(x_1,\dots,x_n)|^2}{x_1\dots x_n}dx_1\dots dx_n\leq \|f\|_{H^1}^2,$$
which is the $n$-dimensional extension of the classical Carleman inequality.

In order to generalise this inequality to broader contexts, one needs to identify the correct analogue of the weight $x_1\dots x_n$. It turns out that, for convex sets, the appropriate function is the Hilbert-Schmidt weight for Hankel operators (see Section \ref{sec3}),
$$\omega_\Omega(x)=m(\Omega\cap (2x-\Omega)).$$
Notice that this function is finite if and only if $\Omega$ does not contain affine lines. Thus, for such a convex set $\Omega$, the natural analogue of Helson's inequality takes the form
$$\int_{\Omega}\dfrac{|\hat{f}(x)|^2}{\omega_\Omega(x)}dx\leq C(\Omega) \|f\|_{L^1}^2,\quad f\in\PW^1(\Omega).$$
In the recent paper \cite{bampouras2026boundaryweightedfourierinequalitiesconvex}, the authors proved Helson's inequality for all polyhedra $P$, with a constant $C(P)$ depending on the dimension and the number of facets of $P$. Our first result shows that Helson's inequality in fact holds for every convex set with no affine lines, with a uniform constant depending only on the dimension.

\begin{theo}\label{helson}
	Let $\Omega\subset \mathbb{R}^n$ be a convex set that does not contain affine lines. Then it holds that
	$$\int_{\Omega}\dfrac{|\hat{f}(x)|^2}{\omega_\Omega(x)}dx\leq \frac{2^n(2n)!\Gamma(n/2+1)^2}{(n!)^3}\|f\|_{L^1}^2, \quad f\in\PW^1(\Omega).$$
\end{theo}

The appearance of the function $\omega_\Omega$ is closely connected to the theory of Hankel operators. For a distribution $\widehat{\phi}$, the Hankel operator $\Ha_\phi:\PW(\Omega)\to \PW(\Omega)$ is defined by
$$\widehat{\Ha_\phi f}(x)=\int_\Omega \widehat{\phi}(x+y)\hat{f}(y)dy, \quad x\in \Omega.$$
It is evident that $\|\Ha_\phi\|\leq \|\phi\|_{L^\infty}$. Z. Nehari \cite{nehari1957bounded} showed that for any bounded Hankel operator on $\PW(\mathbb{R}_+)$ there exists a bounded function $\psi$ such that $\Ha_\phi=\Ha_\psi$. The analogous result for the multivariable Hardy spaces is one of the most famous open problems in complex analysis. Nevertheless, H. Helson, using his inequality, was able to prove that every Hilbert-Schmidt Hankel operator on the multivariable Hardy spaces is generated by a bounded function. More recently, it has been proved that every Schatten class Hankel operator on $\PW(\mathbb{R}^2_+)$ is generated by a bounded symbol \cite{bampouras2026recoveringproductbmoschatten}. As a consequence of Theorem \ref{helson}, we obtain the corresponding Hilbert-Schmidt result for every convex set with no affine lines.

\begin{theo}\label{theo1.2}
	Let $\Omega\subset\mathbb{R}^n$ be a convex set that does not contain affine lines. Then every Hilbert-Schmidt Hankel operator on $\PW(\Omega)$ is generated by a bounded symbol.
\end{theo}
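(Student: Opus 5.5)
We derive Theorem \ref{theo1.2} from Theorem \ref{helson} by duality plus Hahn--Banach, following Helson's argument for the polydisc. The Hankel operator $\Ha_\phi$ depends only on the restriction of $\widehat\phi$ to $2\Omega=\Omega+\Omega$ (convexity gives $\Omega+\Omega=2\Omega$). So we must produce $\psi\in L^\infty(\mathbb{R}^n)$ with $\widehat\psi=\widehat\phi$ on $2\Omega$. Equivalently, the linear functional
$$\ell_\phi(g)=\int_{2\Omega}\widehat\phi(\xi)\,\overline{\hat g(\xi)}\,d\xi,\qquad g\in\PW^1(2\Omega),$$
defined first on a dense class where $\hat g$ is nice, should extend boundedly to $L^1(\mathbb{R}^n)$. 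Indeed, once $|\ell_\phi(g)|\le C\|g\|_{L^1}$, Hahn--Banach extends $\ell_\phi$ to all of $L^1$. Riesz representation then gives $\psi\in L^\infty$ with $\ell_\phi(g)=\int g\bar\psi$ (up to normalisation of the Fourier transform). Testing on $g$ with $\supp\hat g\subset 2\Omega$ forces $\widehat\psi=\widehat\phi$ on $2\Omega$ as distributions, hence $\Ha_\psi=\Ha_\phi$.

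\textbf{Step 1: Hilbert--Schmidt norm.} Compute
$$\|\Ha_\phi\|_{S_2}^2=\int_\Omega\int_\Omega|\widehat\phi(x+y)|^2\,dx\,dy.$$
Substitute $\xi=x+y$. For fixed $\xi$, the admissible $x$ satisfy $x\in\Omega$ and $\xi-x\in\Omega$, i.e.\ $x\in\Omega\cap(\xi-\Omega)$. With $\xi=2z$, $z\in\Omega$, this set has measure $\omega_\Omega(z)$. Hence
$$\|\Ha_\phi\|_{S_2}^2=2^n\int_\Omega|\widehat\phi(2z)|^2\,\omega_\Omega(z)\,dz.$$
(This is presumably the content of Section \ref{sec3}.)

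\textbf{Step 2: the key estimate.} Rescale $g$ to $g_2(t)=2^{-n}g(t/2)$, so that $\widehat{g_2}(z)=\hat g(2z)$ with $\supp\widehat{g_2}\subset\Omega$ and $\|g_2\|_{L^1}=\|g\|_{L^1}$. After the change of variables $\xi=2z$, write $\ell_\phi(g)=2^n\int_\Omega \widehat\phi(2z)\overline{\hat g(2z)}\,dz$. Insert $\omega_\Omega^{1/2}\omega_\Omega^{-1/2}$ and apply Cauchy--Schwarz:
$$|\ell_\phi(g)|\le 2^n\Big(\int_\Omega|\widehat\phi(2z)|^2\omega_\Omega(z)\,dz\Big)^{1/2}\Big(\int_\Omega\frac{|\widehat{g_2}(z)|^2}{\omega_\Omega(z)}\,dz\Big)^{1/2}.$$
The first factor equals $2^{n/2}\|\Ha_\phi\|_{S_2}$ by Step 1. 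The second is at most $C(n)^{1/2}\|g\|_{L^1}$ by Theorem \ref{helson}. This gives the bound $|\ell_\phi(g)|\lesssim\|\Ha_\phi\|_{S_2}\|g\|_{L^1}$, and $\|\psi\|_\infty\lesssim\|\Ha_\phi\|_{S_2}$ comes for free.

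\textbf{Main obstacle: well-definedness and density.} The estimate above requires $\ell_\phi$ to make sense on a class of $g$ that is dense in $\PW^1(2\Omega)$. Two points need care.

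First, $\widehat\phi$ is only a distribution, but Step 1 shows it is a locally $L^2$ function on $2\Omega$ with weight $\omega_\Omega(\cdot/2)$. Since $\omega_\Omega>0$ on the interior, this is enough.

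Second, we should work with the open interior of $2\Omega$. Boundary points have Lebesgue measure zero for convex sets, so they do not matter.

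I would first define $\ell_\phi$ on $g$ with $\hat g\in C_c^\infty(\operatorname{int}2\Omega)$, where the integral converges absolutely. The bound from Step 2 holds on this class for every $g$, so it holds on its $L^1$-closure. I would then check that this closure is all of $\PW^1(2\Omega)$, or at least enough for the uniqueness step. Actually that check is not needed: Hahn--Banach only needs boundedness on the subspace $\{g:\hat g\in C_c^\infty(\operatorname{int}2\Omega)\}$. Equality $\widehat\psi=\widehat\phi$ as distributions on $\operatorname{int}2\Omega$ then follows by testing against exactly those $g$, and this suffices for the identity $\Ha_\psi=\Ha_\phi$ on $\PW(\Omega)$.
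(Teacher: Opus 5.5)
Your proposal is correct and follows essentially the same route as the paper: Cauchy--Schwarz against the Hilbert--Schmidt weight, then Theorem \ref{helson}, then Hahn--Banach and $L^1$--$L^\infty$ duality on the class of $g$ with $\hat g\in C_c^\infty(2\Omega)$. The only cosmetic difference is that you rescale $g$ so as to apply Theorem \ref{helson} on $\Omega$ with weight $\omega_\Omega$, whereas the paper applies it directly to $2\Omega$ with weight $\omega_{2\Omega}$; the two are equivalent because $\omega_{2\Omega}(\xi)=2^n\omega_\Omega(\xi/2)$.
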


There is also a factorization interpretation of this result. In Section \ref{sec5} we show that Theorem \ref{theo1.2} is equivalent to the following weak factorization of $\PW^1(\Omega)$.

\begin{cor}
	Let $\Omega$ be a convex set that does not contain affine lines. Then there is a $C>0$ such that for any $f\in \PW^1(\Omega)$, there are $f_j,g_j$ orthogonal sequences in $\PW(\frac{1}{2}\Omega)$ with
	$$f=\lim_{N\to \infty} \sum_{j=1}^N f_j g_j \text{ in $\mathcal{W}_2(1/2 \; \Omega)$,} \quad  \text{and}\quad  \sum_j (\|f_j\|_{L^2}\|g_j\|_{L^2})^2\leq C\|f\|_{L^1}^2.$$
\end{cor}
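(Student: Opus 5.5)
The plan is to obtain the factorization by duality, using Theorem \ref{helson} directly. Theorem \ref{theo1.2} is then the dual statement of the same estimate. I take $\mathcal{W}_2(\tfrac12\Omega)$ to be the image of the multiplication map
$$M:\mathcal{S}_2\big(\PW(\tfrac12\Omega)\big)\to \PW^1(\Omega),\qquad M\Big(\sum_j s_j\, \bar f_j\otimes g_j\Big)=\sum_j s_j f_j g_j,$$
endowed with the quotient norm $\|F\|_{\mathcal{W}_2}=\inf\{\|T\|_{\mathcal{S}_2}: MT=F\}$. Here $\mathcal{S}_2$ denotes the Hilbert--Schmidt operators, identified with $\PW(\tfrac12\Omega)\otimes_2 \PW(\tfrac12\Omega)$.

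\textbf{Identify the $\mathcal{W}_2$ norm with a weighted $L^2$ norm.} A Hilbert--Schmidt operator $T$ is given by a kernel $K\in L^2(\tfrac12\Omega\times\tfrac12\Omega)$ on the Fourier side. A direct computation gives
$$\widehat{MT}(x)=\int_{\{u+v=x\}}K(u,v)\,du .$$
For fixed $x$ the fiber is parametrized by $u\in \tfrac12\Omega\cap(x-\tfrac12\Omega)$, a set of measure $\omega_{\frac12\Omega}(x/2)=2^{-n}\omega_\Omega(x)$. Cauchy--Schwarz along each fiber, followed by integration in $x$, gives $\int_\Omega |\widehat{MT}|^2/\omega_\Omega\lesssim\|K\|_{L^2}^2$. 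Equality is attained when $K(u,v)=\widehat F(u+v)/\omega_\Omega(u+v)$, up to the constant $2^{-n}$. Hence $M$ is, up to a constant, a coisometry onto the Hilbert space
$$\Big\{F:\int_\Omega \frac{|\widehat F(x)|^2}{\omega_\Omega(x)}\,dx<\infty\Big\},$$
and $\|F\|_{\mathcal{W}_2}\asymp \|\widehat F\,\omega_\Omega^{-1/2}\|_{L^2}$. Dually, the pairing $\langle MT,\phi\rangle=\operatorname{tr}(\Ha_\phi T)$ shows that $\mathcal{W}_2^*$ is the space of Hilbert--Schmidt Hankel operators, with $\|\Ha_\phi\|_{\mathcal{S}_2}^2\asymp\int|\widehat\phi|^2\omega_\Omega$. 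This is the equivalence with Theorem \ref{theo1.2} mentioned in the text.

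\textbf{Apply Theorem \ref{helson}.} For $f\in\PW^1(\Omega)$, Theorem \ref{helson} gives $\|f\|_{\mathcal{W}_2}^2\lesssim\int_\Omega|\hat f|^2/\omega_\Omega\le C(n)\|f\|_{L^1}^2$, so $f$ lies in $\mathcal{W}_2(\tfrac12\Omega)$. One could alternatively run the Hahn--Banach argument with Theorem \ref{theo1.2}: for each Hilbert--Schmidt $\Ha_\phi$ pick a bounded $\psi$ with $\Ha_\psi=\Ha_\phi$, so that $|\langle f,\phi\rangle|\le\|\psi\|_\infty\|f\|_1$. That route needs the norm control $\|\psi\|_\infty\lesssim\|\Ha_\phi\|_{\mathcal{S}_2}$, which follows from the closed graph theorem. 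The direct route avoids this.

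\textbf{Produce the orthogonal sequences.} Take the minimal-norm preimage $T$ of $f$, for instance with the explicit kernel $K$ above. Write its singular value decomposition $T=\sum_j s_j\,\bar e_j\otimes h_j$ with $(e_j)$ and $(h_j)$ orthonormal in $\PW(\tfrac12\Omega)$. Set $f_j=s_j e_j$ and $g_j=h_j$. Then $\sum_j(\|f_j\|\|g_j\|)^2=\|T\|_{\mathcal{S}_2}^2\le C\|f\|_{L^1}^2$. The partial sums $T_N$ converge to $T$ in $\mathcal{S}_2$, so $\sum_{j\le N}f_jg_j=MT_N\to MT=f$ in $\mathcal{W}_2$, because $M$ is bounded.

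\textbf{Main obstacle.} The delicate step is the first one: checking rigorously that $M$ maps $\mathcal{S}_2$ onto exactly the weighted space and that the quotient norm matches the weight. This requires the fiber computation, including measurability of the fiber integrals. It also requires confirming that products of $\PW(\tfrac12\Omega)$ functions have Fourier support in $\Omega$. That holds because $\Omega$ is convex, so $\tfrac12\Omega+\tfrac12\Omega=\Omega$. The scaling identity $\omega_{\frac12\Omega}(x/2)=2^{-n}\omega_\Omega(x)$ must also be tracked through the constants.
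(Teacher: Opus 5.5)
Your argument is correct, but it takes a more direct route than the paper's.

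\textbf{The paper's route.} The paper obtains the corollary by chaining Theorem~\ref{hilbertnehari} with Proposition~\ref{factor}. Helson's inequality and Cauchy--Schwarz give $|\langle f,\phi\rangle|\lesssim\|\Ha_\phi\|_{S^2}\|f\|_{L^1}$. The duality $\|\phi\|_{(W_2)'}=\|\Ha_\phi\|_{S^2}$ of Lemma~\ref{dualSp}, together with the density Lemma~\ref{sumsdense} (which relies on the Carlsson--Perfekt factorization), converts this into $\|f\|_{\mathcal W_2}\lesssim\|f\|_{L^1}$. Proposition~\ref{completion} then supplies the orthogonal sequences via a singular value decomposition.

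\textbf{Your route.} You exploit the fact that at $p=2$ both sides are Hilbert spaces. Your fibre Cauchy--Schwarz, with equality for kernels constant on the fibres $\{u+v=x\}$, amounts to $MM^\ast=2^{-n}I$ on the weighted space. Hence the quotient norm is exactly $\|F\|_{\mathcal W_2(\frac12\Omega)}^2=2^n\int_\Omega|\widehat F|^2/\omega_\Omega$. The corollary is then equivalent to Theorem~\ref{helson}, with constant $2^n$ times the Helson constant. You also get an explicit minimal factorization kernel $2^n\hat f(u+v)/\omega_\Omega(u+v)$, and you need neither Hahn--Banach nor the density lemma. What the paper's detour buys is generality: Proposition~\ref{factor} links Nehari's theorem for $S^p$ to $\mathcal W_{p'}$-factorization for every $p$, and for $p\neq 2$ no closed formula for the $\mathcal W_{p'}$ norm is available.

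\textbf{Two minor remarks.} First, $M$ does not map $\mathcal S_2$ into $\PW^1(\Omega)$, since $\sum_j s_jf_jg_j$ converges in $L^1$ only when $\sum_j s_j<\infty$. Its target is the weighted space, as you state later. Second, you define $\mathcal W_2$ as a quotient of all of $\mathcal S_2$, whereas the paper completes finite sums under the finite-rank infimum. The two agree because finite-rank preimages already attain the quotient norm. This is what Proposition~\ref{completion} asserts, and it also follows by duality from Lemma~\ref{dualSp}. Your fibre bound handles the lower bound for every preimage, so only this upper bound is borrowed from the paper.
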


Our final result concerns the characterization of Schatten class Hankel operators. In one dimension, V. Peller and R. Rochberg \cite{MR878246,Peller1988WienerHopf} showed that Schatten class Hankel operators can be characterized in terms of Besov spaces. The corresponding multivariable result for Hardy spaces was proved in \cite{PottSmith2004}, while related characterizations for other Paley-Wiener spaces, including those associated with the cube and the ball, were obtained in \cite{bampouras2024besovspacesschattenclass,Peng1989PaleyWiener,Peng1988Disk}. We complete this picture for polyhedra by proving that the Schatten class condition is characterized entirely by the corresponding Besov space.

\begin{theo} Let $1\leq p<\infty$. Then, for every polyhedron $P\subset \mathbb{R}^n$ that does not contain affine lines it holds that
	$$\Ha_\varphi\in S^p(\PW(P))
	\quad\Longleftrightarrow\quad
	\varphi\in B_{p,p}^{1/p}(2P),$$
	with equivalence of norms.
\end{theo}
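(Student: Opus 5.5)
The plan is to reduce the equivalence to the two known endpoint regimes and then interpolate. For $1\leq p\leq 2$ the characterization $\Ha_\varphi\in S^p(\PW(P))\iff \varphi\in B^{1/p}_{p,p}(2P)$ is already available, as recalled in the introduction. The remaining range $2<p<\infty$ is where the work lies.

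\textbf{Sufficiency.} For the implication $\varphi\in B^{1/p}_{p,p}(2P)\Rightarrow \Ha_\varphi\in S^p$, I would use a Whitney-type decomposition of $2P$ adapted to the facets of $P$. Concretely, I would take a smooth partition of unity $\{\psi_Q\}$ of $2P$ whose pieces are boxes (in local coordinates near each face) whose size is comparable to their distance to the boundary. The Besov norm on $2P$ is then equivalent to $\big(\sum_Q |Q|\,\|\check\psi_Q*\varphi\|_{L^p}^p\big)^{1/p}$, with the appropriate normalisation. Each localized Hankel operator $\Ha_{\varphi_Q}$ acts between $\PW$ spaces of two sets $A_Q,B_Q\subset P$ with $A_Q+B_Q\supset Q$. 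Its $S^p$ norm is bounded by Nehari/Hilbert–Schmidt type estimates, namely $\|\Ha_{\varphi_Q}\|_{S^p}\lesssim |Q|^{1/p}\|\varphi_Q\|_{L^p}$. I would obtain this by interpolating the trivial $S^\infty$ bound $\|\varphi_Q\|_\infty$, combined with Bernstein's inequality on the box, against the $S^2$ bound $\omega_P$-weighted $L^2$. The pieces are then summed using the finite-overlap property together with the inequality $\|\sum T_Q\|_{S^p}^p\lesssim \sum\|T_Q\|_{S^p}^p$. That inequality is valid for $p\geq 2$ when the $T_Q$ have almost orthogonal ranges and domains, and the geometry of the Whitney decomposition near vertices of $P$ supplies this.

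\textbf{Necessity.} For $p>2$, I would use duality. Since $(S^p)^*=S^{p'}$ with $p'<2$, and the characterization for $p'$ is known, the dual of $B^{1/p'}_{p',p'}(2P)$ under the pairing $\langle \Ha_\varphi,\Ha_\psi\rangle_{S^2}=\int \varphi\bar\psi\,\omega_P$ is $B^{1/p}_{p,p}(2P)$. This identification relies on $\omega_P(x)$ being comparable to the product of distances to the facets locally, which is the polyhedral structure. The step is then to test $\Ha_\varphi$ against $\Ha_\psi$ for $\psi\in B^{1/p'}_{p',p'}$: this gives $|\langle\varphi,\psi\rangle_{\omega}|\leq\|\Ha_\varphi\|_{S^p}\|\Ha_\psi\|_{S^{p'}}\lesssim \|\Ha_\varphi\|_{S^p}\|\psi\|_{B^{1/p'}_{p',p'}}$.

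\textbf{Main obstacle.} I expect the hardest part to be this duality identification of weighted Besov spaces on $2P$, and in particular the uniform comparability $\omega_P\asymp\prod_F \mathrm{dist}(\cdot,F)$ near lower-dimensional faces where several facets meet. I would first verify it via the cone structure at each vertex, using the local product-of-half-spaces model together with the Hardy space result of Pott–Smith.
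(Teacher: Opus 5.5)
\textbf{There is a genuine gap in your sufficiency argument for $2<p<\infty$.} Your summation step $\|\sum_Q T_Q\|_{S^p}^p\lesssim\sum_Q\|T_Q\|_{S^p}^p$ needs an (almost) block-diagonal structure: the $T_Q$ must have almost orthogonal domains \emph{and} almost orthogonal ranges. For $p>2$ it fails as soon as one side is shared. Take $T_kx=\langle x,e_0\rangle e_k$, $1\leq k\leq N$; then $\|\sum_kT_k\|_{S^p}^p=N^{p/2}$, while $\sum_k\|T_k\|_{S^p}^p=N$. In fact, with orthogonal ranges one has $|\sum_Q T_Q|^2=\sum_Q T_Q^\ast T_Q$ and $\mathrm{tr}\bigl(\sum_Q A_Q\bigr)^{p/2}\geq\sum_Q\mathrm{tr}A_Q^{p/2}$ for positive $A_Q$. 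So the inequality goes the wrong way.

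The localized Hankel operators are nowhere near block diagonal. On the Fourier side, $\Ha_{\varphi_Q}$ acts from and into $L^2(P\cap(Q-P))$, and these sets overlap heavily for different $Q$. Already for $P=\mathbb{R}_+$ and $Q=[2^k,2^{k+1}]$ they are the nested intervals $(0,2^{k+1}]$. The kernels $\hat\varphi_Q(x+y)$ live on anti-diagonal strips, all of which meet the rows and columns with $x$ or $y$ near $0$; no geometry near the vertices changes this. With only the triangle inequality you get an $\ell^1$-sum, which is too weak to give $B^{1/p}_{p,p}$.

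This is exactly why $p>2$ is not reachable by decomposition. It is also why your opening plan to ``interpolate'' needs an endpoint at $p=\infty$, which you never produce. For $\Ha_\varphi$ itself there is no usable one in the Besov scale: symbols in $B^0_{\infty,\infty}$ need not give bounded Hankel operators, already on $\PW(\mathbb{R}_+)$.

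\textbf{How the paper supplies the endpoint.} It works with the weighted operators $\Ha^{\sigma,\tau}_\varphi$ rather than $\Ha_\varphi$. By Proposition \ref{polyhconseq} (the machinery of \cite{bampouras2024besovspacesschattenclass} plus interpolation), the full range $1\leq p<\infty$ follows at $\sigma=\tau=0$, once a single positive integral operator is bounded on $L^2(P)$ for all $\sigma,\tau>-1/2$. Its kernel is $K_{\sigma,\tau}(x,y)=\omega_P^\sigma(x)\omega_P^\tau(y)\omega_P^{-\sigma-\tau-1}(\frac{x+y}{2})$. This is Theorem \ref{Kpolyh}, proved by Schur's test with test function $\omega_P^{-1/2}$. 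The inputs are the polynomial growth $\mu(\{x:d_P(x,y)<r\})\lesssim r^n$ for $d\mu=dx/\omega_P$, and the decay $R(x,y)\lesssim e^{-d_P(x,y)/2}$ in the logarithmic metric (Lemma \ref{metricestimates}).

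\textbf{Secondary problems.} The comparability $\omega_P\asymp\prod_F\mathrm{dist}(\cdot,F)$ that you plan to verify is false at non-simple vertices. At the apex of a square pyramid, for instance, $\omega_P$ is homogeneous of degree $3$, while the product of the four distances has degree $4$. The correct statement is Lemma \ref{omegaformula}, $\omega_P\asymp\min_{I\in\mathcal{B}}q_I$. At such vertices the tangent cone is not an affine orthant, so the Pott--Smith local model does not apply. Your duality argument for necessity is the standard route and does reduce to the known bound for $p'<2$. However, the weighted Besov duality it rests on must be established without that false comparability. In any case it leaves the sufficiency direction, the real difficulty, untouched.
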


	\begin{ack}
		The author was supported by the Research Council of Finland, project no. 371637. They are a member of the Finnish Centre of Excellence in Randomness and Structures. Furthermore, the author gratefully acknowledges the assistance of ChatGPT (GPT-5.6 Sol, OpenAI), whose suggestions helped generate some of the ideas developed in this work and contributed to sharpening and refining several of the techniques and arguments, especially in Sections \ref{sec2} and \ref{Schatten}.

	\end{ack}

\section{Helson's Inequality}\label{sec2}
Let $\Omega\subset \mathbb{R}^n$ be a convex set that does not contain affine lines. For $p\geq 1,$ we define the associated Paley-Wiener spaces by
$$\PW^p(\Omega)=\{f\in L^p(\mathbb{R}^n):\supp\hat{f}\subset \Omega\},$$ endowed with the $L^p$ norm.

For a triple $(p,q,d),$ $1\leq p,q<\infty,$ $d\in\mathbb{R}$, we can consider the boundary-weighted Fourier inequality
$$\int_\Omega \dfrac{|\hat{f}(x)|^p}{\omega_\Omega^d(x)}dx\lesssim \|f\|_{L^q}^p,\quad f\in \PW^q(\Omega),$$ where $\omega_\Omega(x)=m(\Omega\cap (2x-\Omega))$, which was recently introduced and studied in \cite{bampouras2026boundaryweightedfourierinequalitiesconvex}. In this section we will be interested in the case $p=2,$ $q=d=1.$ We will refer to this inequality as Helson's inequality, since Helson proved the inequality for the classical Hardy spaces $\PW(\mathbb{R}^n_+)$.
This inequality has been proved for polyhedra \cite[Theorem 4.5]{bampouras2026boundaryweightedfourierinequalitiesconvex} and for the ball \cite[Theorem 5.3]{bampouras2026boundaryweightedfourierinequalitiesconvex}. The main theorem of this section, which is Theorem \ref{helson}, reads as follows.
\begin{theo}\label{helson1}
	Let $\Omega\subset \mathbb{R}^n$ be a convex set that does not contain affine lines. Then it holds that $$\int_{\Omega}\dfrac{|\hat{f}(x)|^2}{\omega_\Omega(x)}dx\leq \frac{2^n(2n)!\Gamma(n/2+1)^2}{(n!)^3}\|f\|_{L^1}^2, \quad f\in\PW^1(\Omega).$$
\end{theo}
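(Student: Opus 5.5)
The plan is to reduce Theorem \ref{helson1} to a Hilbert--Schmidt estimate by duality, and to control the resulting extremal quantity pointwise in $x$ using only the convex geometry of the centrally symmetric sections
$$K_x:=\Omega\cap(2x-\Omega),\qquad \omega_\Omega(x)=m(K_x).$$
The two geometric inputs I intend to use are these. First, $K_x$ is a convex body symmetric about $x$; it has finite measure precisely because $\Omega$ contains no affine lines. Second, the Rogers--Shephard inequality $m(K-K)\le\binom{2n}{n}m(K)$. This is where I expect the factor $\frac{(2n)!}{(n!)^2}$ in the constant to come from. The remaining factor $\frac{2^n\Gamma(n/2+1)^2}{n!}$ should come from a ball normalisation, namely $m(B_1)=\pi^{n/2}/\Gamma(n/2+1)$, together with the identity
$$\omega_\Omega(x)=\int 1_\Omega(y)1_\Omega(2x-y)\,dy=2^n(1_\Omega*1_\Omega)(2x).$$

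\textbf{Step 1 (duality).} By duality of $L^2(\Omega,\omega_\Omega^{-1}dx)$, it suffices to show the following. For every $a$ with $\int_\Omega|a|^2\omega_\Omega\,dx\le1$, there is $\psi\in L^\infty$ with $\|\psi\|_\infty\le C^{1/2}$ and $\hat\psi=a$ on $\Omega$. Then for $f\in\PW^1(\Omega)$,
$$\int_\Omega\hat f\,\bar a\,dx=\langle f,\psi\rangle\le C^{1/2}\|f\|_{L^1}.$$
The naive choice $\hat\psi=a 1_\Omega$ fails, because $\int_\Omega\omega_\Omega^{-1}$ diverges. So the extension must be built so that it spreads $a(x)$ over the whole body $K_x$.

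\textbf{Step 2 (construction of the symbol).} Set
$$\psi=\int_\Omega a(x)\,\Phi_x\,dx .$$
Each $\Phi_x$ is chosen to satisfy three requirements.
\begin{itemize}[label={}]
\item (i) $\hat\Phi_x$ restricted to $\Omega$ acts as a reproducing kernel at $x$, so that $\hat\psi=a$ on $\Omega$.
\item (ii) $\Phi_x$ is a product $g_x\overline{h_x}$ of two functions in $\PW(\tfrac12(K_x-x))$, each with $L^2$ norm of order $m(K_x)^{-1/2}$ up to the ball normalisation.
\item (iii) The family $\{g_x\}$ and the family $\{h_x\}$ are almost orthogonal in $x$.
\end{itemize}
The natural candidates are $g_x=h_x=e^{i\langle x,\cdot\rangle}\check{1}_{E_x}$, where $E_x$ is a centred ellipsoid adapted to $\tfrac12(K_x-x)$. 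The Cauchy--Schwarz inequality then gives
$$|\psi(t)|^2\le\int_\Omega|a|^2\omega_\Omega\,dx\cdot\int_\Omega\frac{|g_x(t)|^2}{\omega_\Omega(x)}\,dx ,$$
so the whole problem reduces to proving $\sup_t\int_\Omega|g_x(t)|^2\omega_\Omega(x)^{-1}dx\le C$. This is a Helson-type inequality for a single explicit family of functions, which is where the convexity enters. The key observation I would use is that $\frac12(K_x-x)\subset\frac12(\Omega-\Omega)$, and that the sets $K_x$ vary monotonically under the dilations $x\mapsto y+\lambda(x-y)$. This lets me change variables $x\mapsto$ (a point of $K_x$). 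Rogers--Shephard then converts $m(K_x-K_x)$ back into $\omega_\Omega(x)$ at the cost of $\binom{2n}{n}$.

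\textbf{Main obstacle.} I expect the hardest step to be the uniform bound $\sup_t\int_\Omega|g_x(t)|^2\omega_\Omega(x)^{-1}dx<\infty$. It requires a covering or change-of-variables argument that trades the pointwise decay of $|g_x(t)|^2\approx m(K_x)^2 1_{\{|t|\lesssim K_x^\circ\}}$ against $\omega_\Omega(x)^{-1}$. In other words, one must show that for fixed $t$ the set of $x$ with $t\in K_x^\circ$ (the polar body, suitably scaled) has $\omega_\Omega$-weighted measure bounded by a dimensional constant. My first attempt would be a polar-coordinates computation in which the volume of the polar body is paired with $m(K_x)$ through the Blaschke--Santal\'o and Rogers--Shephard inequalities. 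This should produce exactly the constant $\frac{2^n(2n)!\Gamma(n/2+1)^2}{(n!)^3}$.
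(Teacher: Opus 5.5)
Your Step~1 is a correct reformulation: by duality it is the Hilbert--Schmidt Nehari statement, which is equivalent to the inequality. Step~2, however, cannot produce the required $\psi$, and the ``main obstacle'' is not a technical hurdle: the bound you want is false for the family you describe.

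Suppose $\Phi_x$ is modulated so that its spectrum sits at $x$. (As written, $g_x\overline{g_x}=|g_x|^2$ has spectrum centred at $0$ and reproduces nothing at $x$.) Then requirement (ii) gives $\supp\widehat{\Phi_x}\subset\overline{K_x}\subset\overline{\Omega}$, hence $\supp\widehat{\psi}\subset\overline{\Omega}$. Combined with (i), this forces $\widehat{\psi}=a1_\Omega$, which is exactly the naive extension you discarded. Concretely, take $a=\omega_\Omega^{-1}1_A$ with $A\subset\Omega$ compact. Then $\psi(0)=\int_A\omega_\Omega^{-1}$ while $\int_\Omega|a|^2\omega_\Omega=\int_A\omega_\Omega^{-1}$. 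So $\|\psi\|_\infty\geq(\int_A\omega_\Omega^{-1})^{1/2}\|a\|_{L^2(\omega_\Omega)}$, and this blows up as $A\uparrow\Omega$.

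The same failure shows up in your Cauchy--Schwarz reduction. Condition (i) needs $\int_\Omega\widehat{\Phi_x}(\xi)\,dx\approx 1$. For your nonnegative ellipsoid kernels this means $\Phi_x(0)=\int\widehat{\Phi_x}\approx 1$, so $\int_\Omega|\Phi_x(0)|^2\omega_\Omega(x)^{-1}\,dx$ diverges like $\int_\Omega\omega_\Omega^{-1}$. The size $m(K_x)^2$ in your last paragraph is not the normalisation that (i) requires. Moreover, (i) cannot hold exactly, since $\widehat{g_x\overline{h_x}}$ is a continuous function, not $\delta_x$. No covering or change of variables fixes this: a bounded extension of $a$ must carry Fourier mass outside $\Omega$.

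The missing idea is to stop building an extension and instead find one bounded function whose Fourier transform dominates $1/\omega_\Omega$ on $\Omega$. The paper first reduces to bounded $\Omega$ by truncating to $\Omega\cap B(0,r)$, using that $\omega$ is monotone under inclusion. It then takes $\Psi_\Omega=\frac{(2\pi)^n}{n!}i^{-n}e^{2\pi i h_\Omega}$, so $\|\Psi_\Omega\|_\infty=(2\pi)^n/n!$. Polar coordinates, the Gamma integral and $m(L^\ast)=\frac1n\int_{S^{n-1}}h_L^{-n}\,d\sigma$ give $\widehat{\Psi_\Omega}(x)=m((\Omega-x)^\ast)$ for $x\in\Omega$.

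Set $K=(\Omega-x)\cap(x-\Omega)$. Kuperberg's \emph{reverse} Santal\'o inequality $m(K)m(K^\ast)\geq c_n\pi^n/n!$ gives $1/\omega_\Omega(x)\leq\frac{n!}{c_n\pi^n}m(K^\ast)$. Also $K^\ast=\conv((\Omega-x)^\ast,-(\Omega-x)^\ast)$ has volume at most $2^n m((\Omega-x)^\ast)$. Hence $\int_\Omega|\hat f|^2\omega_\Omega^{-1}\leq\frac{2^n n!}{c_n\pi^n}\langle f\ast\Psi_\Omega,f\rangle\leq\frac{4^n}{c_n}\|f\|_{L^1}^2$, which is exactly the stated constant.

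Two details of your plan are off here. First, Blaschke--Santal\'o bounds the volume product from above, which is the wrong direction for bounding $1/m(K_x)$ by a polar volume. Second, the factors $(2n)!/(n!)^2$ and $\Gamma(n/2+1)^2$ come from Kuperberg's constant $c_n$, not from $m(K-K)\leq\binom{2n}{n}m(K)$. The only Rogers--Shephard-type input is the factor $2^n$ for $\conv(L,-L)$.
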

First, let us note that it suffices to prove Theorem \ref{helson1} for bounded sets.
\begin{lm}\label{unbounded}
	It suffices to prove Theorem \ref{helson1} for bounded sets.
\end{lm}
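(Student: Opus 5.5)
The plan is to exhaust an unbounded $\Omega$ by the bounded convex sets $\Omega_R=\Omega\cap B(0,R)$ and pass to the limit with Fatou's lemma and monotone convergence. The constant in Theorem \ref{helson1} depends only on $n$, so it will not deteriorate as $R\to\infty$. The difficulty is that an $f\in\PW^1(\Omega)$ need not have $\supp\hat f$ inside any $\Omega_R$, so we first have to approximate $f$ by functions whose spectra are bounded, without enlarging the $L^1$ norm in the limit.

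\emph{Step 1 (localizing the spectrum).} Fix a Schwartz function $\psi$ with $\hat\psi\in C_c^\infty$, $\hat\psi(0)=1$, $0\le\hat\psi\le 1$ and $\supp\hat\psi\subset B(0,1)$. Set $\psi_R(x)=R^{-n}\psi(x/R)$, so that $\hat\psi_R(\xi)=\hat\psi(R\xi)$. This is the wrong scaling for our purpose; what we want is a cutoff on the Fourier side. So instead take $\hat\phi_R(\xi)=\hat\phi(\xi/R)$ with $\hat\phi\in C_c^\infty(B(0,1))$ and $\hat\phi=1$ near $0$. Then $\phi_R(x)=R^n\phi(Rx)$ has $\|\phi_R\|_{L^1}=\|\phi\|_{L^1}$, and $\phi_R$ is an approximate identity. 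Put $f_R=f*\phi_R$. Then $\hat f_R=\hat f\,\hat\phi(\cdot/R)$ is supported in $\supp\hat f\cap \overline{B(0,R)}$. Moreover $\|f_R-f\|_{L^1}\to0$, because $\int\phi=\hat\phi(0)=1$ and $(\phi_R)$ is an approximate identity.

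\emph{Step 2 (applying the bounded case).} The support condition is open, so I will work with $\supp\hat f_R\subset\Omega_{R'}:=\Omega\cap B(0,R')$ for any $R'>R$. This is a bounded convex set containing no lines, so the bounded case of the theorem gives
$$\int_{\Omega_{R'}}\frac{|\hat f_R|^2}{\omega_{\Omega_{R'}}}\le C(n)\|f_R\|_1^2 .$$
Since $\Omega_{R'}\subset\Omega$, we have $\omega_{\Omega_{R'}}\le\omega_\Omega$. Hence the integral of $|\hat f_R|^2/\omega_\Omega$ over $\Omega_{R'}$, which equals its integral over all of $\Omega$, is bounded by the left-hand side above and therefore by $C(n)\|f_R\|_1^2$.

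\emph{Step 3 (passing to the limit).} As $R\to\infty$ we have $\hat f_R\to\hat f$ pointwise, since $\hat\phi(\xi/R)\to1$. Fatou's lemma then gives
$$\int_\Omega\frac{|\hat f|^2}{\omega_\Omega}\le\liminf_R C(n)\|f_R\|_1^2=C(n)\|f\|_1^2 .$$
A few details need checking. The function $\omega_\Omega$ should be positive on $\Omega$; this holds on the interior, and if $\Omega$ is open with nonempty interior there is no issue. If $\Omega$ has empty interior, the hypothesis $\supp\hat f\subset\Omega$ forces $\hat f=0$, since $\hat f$ is continuous, so the statement is trivial. I expect no serious obstacle here; the one point to handle carefully is the support condition in Step 2, which is why I pass to the slightly larger bounded set $\Omega_{R'}$. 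Alternatively, one can take $\hat\phi=1$ on $B(0,1/2)$ and use $\Omega_{2R}$.
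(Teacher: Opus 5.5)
Your proof is correct, and its skeleton is the paper's: exhaust $\Omega$ by $\Omega_r=\Omega\cap B(0,r)$, use $\omega_{\Omega_r}\le\omega_\Omega$, and rely on the constant depending only on $n$. The difference lies in the approximation step.

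The paper quotes the density of $\mathcal{F}^{-1}C_c^\infty(\Omega)$ in $\PW^1(\Omega)$, a lemma from the earlier boundary-weighted paper. That lemma is not trivial, since it must move the spectrum strictly inside $\Omega$, away from $\partial\Omega$. The paper then applies the bounded case to this dense class. It leaves the passage to general $f$ implicit; that passage again needs a Fatou-type argument, using that $L^1$ convergence gives uniform convergence of $\hat f$.

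You only cut off large frequencies via $\hat f_R=\hat f\,\hat\phi(\cdot/R)$. This is enough because the bounded case already covers every element of $\PW^1(\Omega_{R'})$, with no requirement that the spectrum avoid the boundary. Combined with your explicit Fatou step using $\|f_R\|_{L^1}\to\|f\|_{L^1}$, this makes your argument self-contained. The paper's version is shorter, given a density lemma it uses elsewhere anyway (Section 3 and Lemma \ref{sumsdense}).

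One cosmetic point: delete the false start with $\psi_R$ at the beginning of Step 1.
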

\begin{proof}
	Let us assume that Theorem \ref{helson1} holds for bounded sets. By \cite[Lemma 2.6]{bampouras2026boundaryweightedfourierinequalitiesconvex}, functions in $\mathcal{F}^{-1}C_c^\infty(\Omega)$ are dense in $\PW^1(\Omega)$. Therefore, let us set $\Omega_r=\Omega\cap B(0,r)$. Since $\Omega_r\subset \Omega$, we have that $\omega_{\Omega_r}(x)\leq \omega_\Omega(x),$ $x\in\mathbb{R}^n$. Let $f\in \mathcal{F}^{-1}C_c^\infty(\Omega)$. Then there is $r>0$ with $\supp\hat{f}\subset \Omega_r$ and thus
	$$\int_\Omega \dfrac{|\hat{f}(x)|^2}{\omega_\Omega(x)}dx\leq\int_{\Omega_r} \dfrac{|\hat{f}(x)|^2}{\omega_{\Omega_r}(x)}dx\leq \frac{2^n(2n)!\Gamma(n/2+1)^2}{(n!)^3}\|f\|_{L^1}^2,$$ where the last inequality holds since $f\in \PW^1(\Omega_r)$.
\end{proof}
In order to state our next lemma, we need some notation. For a bounded convex set $\Omega$, its support function is defined by $$h_\Omega(x)=\sup_{y\in \Omega} \langle x,y\rangle.$$ The polar set of $\Omega$ is defined by
$$\Omega^\ast =\{x\in\mathbb{R}^n:\langle x,y\rangle\leq 1\text{ for all } y\in\Omega\}=h_\Omega^{-1}(-\infty,1].$$
The key function in this section will be the function
$$\Psi_\Omega(x)=\dfrac{(2\pi)^n}{n!}i^{-n}e^{2 \pi i h_\Omega(x)}.$$
\begin{lm}\label{psilemma}
	It holds that $$\widehat{\Psi_\Omega}(x)=m((\Omega-x)^\ast), \quad x\in\Omega,$$ where $\widehat{\Psi_\Omega}$ is understood as a distribution.
\end{lm}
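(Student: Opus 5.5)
The plan is to compute the distributional Fourier transform of $\Psi_\Omega$ by regularizing it with an exponential damping factor. The resulting absolutely convergent integral can be evaluated in polar coordinates, where the radial integral is elementary. The comparison with $m((\Omega-x)^\ast)$ then comes from the polar-coordinate formula for the volume of a polar body. We work with $\Omega$ bounded and open, which is allowed by Lemma \ref{unbounded}, so that $h_\Omega$ is finite, continuous and positively $1$-homogeneous. We use the convention $\hat g(x)=\int g(y)e^{-2\pi i\langle x,y\rangle}\,dy$.

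\textbf{Step 1: the target in polar form.} Fix $x\in\Omega$. Then $h_{\Omega-x}(y)=h_\Omega(y)-\langle x,y\rangle$. For $\theta\in S^{n-1}$ set $a_x(\theta)=h_\Omega(\theta)-\langle x,\theta\rangle$. Since $B(x,\delta)\subset\Omega$ with $\delta=\operatorname{dist}(x,\partial\Omega)$, we have $a_x(\theta)\geq\delta>0$. Moreover this lower bound is uniform for $x$ in a compact subset of $\Omega$.

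The set $(\Omega-x)^\ast$ consists of the points $r\theta$ with $r\,a_x(\theta)\leq 1$. Integrating in polar coordinates therefore gives
$$m((\Omega-x)^\ast)=\frac1n\int_{S^{n-1}}a_x(\theta)^{-n}\,d\theta.$$

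\textbf{Step 2: regularization.} For $\varepsilon>0$ define $\Psi_\varepsilon(y)=\Psi_\Omega(y)e^{-2\pi\varepsilon|y|}$. Then $\Psi_\varepsilon\to\Psi_\Omega$ boundedly and pointwise, hence in $\mathcal S'$. It follows that $\widehat{\Psi_\varepsilon}\to\widehat{\Psi_\Omega}$ in $\mathcal S'$.

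Each $\Psi_\varepsilon$ is in $L^1$, so its Fourier transform is a genuine integral. In polar coordinates $y=r\theta$, homogeneity of $h_\Omega$ gives
$$\widehat{\Psi_\varepsilon}(x)=\frac{(2\pi)^n i^{-n}}{n!}\int_{S^{n-1}}\int_0^\infty r^{n-1}e^{-r(2\pi\varepsilon-2\pi i a_x(\theta))}\,dr\,d\theta .$$
We evaluate the radial integral with $\int_0^\infty r^{n-1}e^{-sr}\,dr=(n-1)!/s^n$, valid for $\operatorname{Re}s>0$. This yields
$$\widehat{\Psi_\varepsilon}(x)=\frac{1}{n}\int_{S^{n-1}}\frac{i^{-n}}{(\varepsilon-i a_x(\theta))^n}\,d\theta .$$

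\textbf{Step 3: passage to the limit.} Let $\varepsilon\to0$. The integrand tends to $\dfrac{i^{-n}}{(-i)^n a_x(\theta)^{n}}=a_x(\theta)^{-n}$. It is dominated by $a_x(\theta)^{-n}\leq\delta^{-n}$, because $|\varepsilon-ia_x|\geq a_x$.

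By dominated convergence, $\widehat{\Psi_\varepsilon}(x)\to m((\Omega-x)^\ast)$, and the convergence is uniform on compact subsets of $\Omega$. Pairing with a test function $\phi\in C_c^\infty(\Omega)$, the $\mathcal S'$ convergence and the local uniform convergence together give
$$\langle\widehat{\Psi_\Omega},\phi\rangle=\int_\Omega m((\Omega-x)^\ast)\phi(x)\,dx.$$
This is exactly the claim, understood as an identity of distributions on $\Omega$.

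\textbf{Main obstacle.} The delicate point is Step 3: justifying that the limit of the regularized transforms, which is only a distributional limit, is represented on $\Omega$ by the pointwise limit function. This rests entirely on the uniform lower bound $a_x(\theta)\geq\operatorname{dist}(x,\partial\Omega)$ from Step 1. Outside $\Omega$ this bound fails, which is why the identity is claimed only for $x\in\Omega$.
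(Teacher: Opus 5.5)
Your proof is correct and follows the paper's own argument. You use the same damping factor $e^{-2\pi\varepsilon|y|}$, the same polar-coordinate evaluation of the radial integral via the Gamma function, and the same polar-body volume formula $m(K^\ast)=\frac1n\int_{S^{n-1}}h_K^{-n}\,d\sigma$; your Step 3 (the bound $a_x(\theta)\geq \mathrm{dist}(x,\partial\Omega)$, which lets you pass from $\mathcal{S}'$-convergence to the pointwise limit on $\Omega$) spells out what the paper leaves as ``allowing $\epsilon\to 0$ finishes the proof.''
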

\begin{proof}
	Let $$\Psi_{\Omega,\epsilon}(x)=\dfrac{(2\pi)^n}{n!}i^{-n}e^{2 \pi i h_\Omega(x)}e^{-2\pi \epsilon|x|}\in L^1(\mathbb{R}^n).$$ If $\sigma$ denotes the standard surface measure on $S^{n-1}$, using polar coordinates we can write
	\begin{eqnarray} \widehat{\Psi_{\Omega,\epsilon}}(x)&=&\int_{\mathbb{R}^n}\Psi_{\Omega,\epsilon}(y)e^{-2\pi i \langle x,y\rangle}dy=\dfrac{(2\pi)^n}{n!}i^{-n}\int_{\mathbb{R}^n}e^{2 \pi i h_\Omega(y)}e^{-2\pi \epsilon |y|}e^{-2\pi i \langle x,y\rangle}dy \nonumber \\
		&=&\dfrac{(2\pi)^n}{n!}i^{-n}\int_{S^{n-1}}\int_{0}^\infty r^{n-1}e^{2 \pi i r(h_\Omega(\theta)-\langle x,\theta\rangle+i\epsilon)}drd\sigma(\theta).  \nonumber 
	\end{eqnarray} 
	Since, via the Gamma function, $$\int_0^\infty r^{n-1}e^{2\pi i z r}dr=(n-1)!(2\pi)^{-n} i^n z^{-n}, \quad \Image z>0,$$ we get that
	
	$$ \widehat{\Psi_{\Omega,\epsilon}}(x)=\frac{1}{n}\int_{S^{n-1}}  (h_\Omega(\theta)-\langle x,\theta\rangle+i\epsilon)^{-n}d\sigma(\theta). $$
	Since $h_\Omega(\theta)-\langle x,\theta\rangle=h_{\Omega-x}(\theta)$ and for any compact convex set $K$ \cite[Equation (14)]{LangharstRoysdonZvavitch2022}
	$$m(K^\ast)=\frac{1}{n}\int_{S^{n-1}}h_K(\theta)^{-n}d\sigma(\theta),$$ allowing $\epsilon\to 0$ finishes the proof.
\end{proof}
Combining the above lemmas we can prove Theorem \ref{helson1}.
\\	\textbf{Proof of Theorem \ref{helson1}.}  Let, without loss of generality (Lemma \ref{unbounded}), $\Omega$ to be bounded. For a convex set centrally symmetric (see \cite{Kuperberg2008}) it holds that 
$$m(K)m(K^\ast)\geq c_n\frac{\pi^n}{n!},$$ where $$c_n=\frac{2^n (n!)^3}{(2n)!\Gamma(n/2+1)^2}.$$ Letting $K=(\Omega-x)\cap (x-\Omega)$ we get that
$$\frac{1}{\omega_\Omega(x)}=\frac{1}{m(K)}\leq \frac{n!}{c_n \pi^n}m(K^\ast).$$ 
We can observe that 
$$K^\ast=\conv ((\Omega-x)^\ast,-(\Omega-x)^\ast),$$ and thus by a Rogers-Shephard type inequality (see \cite{AlonsoGutierrezEtAl2021,RogersShephard1958}),
$$m(K^\ast)= m(\conv ((\Omega-x)^\ast,-(\Omega-x)^\ast))\leq 2^nm((\Omega-x)^\ast).$$
Combining the above inequalities with Lemma \ref{psilemma} we get that
$$\frac{1}{\omega_\Omega(x)}\leq \frac{2^nn!}{c_n\pi^n}\widehat{\Psi_\Omega}(x).$$
For $f\in \PW^1(\Omega)$,
\begin{eqnarray} \int_\Omega\dfrac{|\hat{f}(x)|^2}{\omega_\Omega(x)}dx &\leq& \frac{2^nn!}{c_n\pi^n} \int_\Omega|\hat{f}(x)|^2\widehat{\Psi_\Omega}(x)dx =  \frac{2^nn!}{c_n\pi^n} \langle f\ast \Psi_\Omega,f\rangle \nonumber \\
	&\leq&\frac{2^nn!}{c_n\pi^n} \|\Psi_\Omega\|_{L^\infty}\|f\|_{L^1}^2=\frac{4^n}{c_n}\|f\|_{L^1}^2. \nonumber 
\end{eqnarray}
The proof is complete. \qed

Let us notice that $c_n$ is an increasing sequence with $\frac{4}{\pi}\leq c_n\leq \sqrt{2}$.

The symmetric Mahler conjecture (see \cite{Mahler1939,FradeliziMeyerZvavitch2023}) suggests that the stronger inequality 
$$m(K)m(K^\ast)\geq \frac{4^n}{n!},$$ should hold. This would imply the stronger inequality  $$\int_{\Omega}\dfrac{|\hat{f}(x)|^2}{\omega_\Omega(x)}dx\leq \pi^n\|f\|_{L^1}^2, \quad f\in\PW^1(\Omega).$$ 
This conjecture holds for $n=1,2,3$ (see \cite{Mahler1939Minimal,IriyehShibata2020}) and thus the above inequality also holds.

\section{Hankel Operators and Nehari's Theorem}\label{sec3}
In this section we are going to translate Theorem \ref{helson1} into the language of Hankel operators on Paley-Wiener spaces, giving a positive answer to a question posed in \cite{MR4502777} for any convex set that does not contain affine lines. The question was originally posed only for the disc, and very recently was answered for the ball in $\mathbb{R}^n$.

Let $\Omega\subset\mathbb{R}^n$ be a convex set that does not contain affine lines. For a distribution $\widehat{\phi}$, we define the Hankel operator generated by the symbol $\phi$, $\Ha_\phi:\PW(\Omega)\to \PW(\Omega)$ by 
$$\widehat{\Ha_\phi f}(x)=\int_{\Omega}\widehat{\phi}(x+y)\hat{f}(y)dy,\quad x\in \Omega.$$
These operators are well defined on functions in $\mathcal{F}^{-1}C_c^\infty(\Omega)$ which are dense in $\PW(\Omega)$ \cite[Lemma 2.6]{bampouras2026boundaryweightedfourierinequalitiesconvex}. A simple observation shows that $\|\Ha_\phi\|\leq \|\phi\|_{L^\infty}$, and thus every bounded function generates a bounded Hankel operator. Nehari's theorem \cite{nehari1957bounded} states that for the case of $\Omega=\mathbb{R}_+$, every bounded Hankel operator is generated by some bounded function. For a bounded non-polygonal set this question was given a negative answer \cite{MR4700194,MR4502777}. Instead of asking whether every bounded Hankel operator is generated by a bounded symbol, Brevig and Perfekt in \cite{MR4502777} asked whether, in the case $\Omega=\mathbb{D}$, every Hilbert-Schmidt Hankel operator is generated by a bounded symbol. The Hilbert-Schmidt norm of an integral operator is the $L^2$ norm of its kernel, thus
$$\|\Ha_\phi\|_{S^2}^2=\int_\Omega \int_\Omega |\widehat{\phi}(x+y)|^2dxdy=2^{-n}\int_{\mathbb{R}^n}|\widehat{\phi}(x)|^2\omega_{2\Omega}(x)dx.$$

It turns out that this question is equivalent to Helson's inequality.
\begin{theo}\label{hilbertnehari}
	Let $\Omega\subset\mathbb{R}^n$ be a convex set that does not contain affine lines. Then every Hilbert-Schmidt Hankel operator on $\PW(\Omega)$ is generated by a bounded symbol.
\end{theo}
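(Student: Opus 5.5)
The plan is a duality argument, with Theorem \ref{helson1} supplying the key estimate. A Hilbert-Schmidt Hankel operator $\Ha_\phi$ is determined by the restriction of $\widehat{\phi}$ to $2\Omega$. By the formula above, the corresponding quantity is finite:
$$\|\Ha_\phi\|_{S^2}^2=2^{-n}\int_{2\Omega}|\widehat{\phi}(x)|^2\omega_{2\Omega}(x)\,dx.$$
We want a $\psi\in L^\infty(\mathbb{R}^n)$ with $\widehat{\psi}=\widehat{\phi}$ on $2\Omega$ in the sense of distributions. Then $\Ha_\psi=\Ha_\phi$, and we also want the bound $\|\psi\|_{L^\infty}\lesssim\|\Ha_\phi\|_{S^2}$.

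To build $\psi$, define a linear functional on the dense subspace $\mathcal{F}^{-1}C_c^\infty(2\Omega)\subset\PW^1(2\Omega)$ by
$$L(f)=\int_{2\Omega}\widehat{\phi}(x)\overline{\hat f(x)}\,dx$$
(or without the conjugate, depending on the pairing convention). By Cauchy--Schwarz,
$$|L(f)|\leq\Big(\int_{2\Omega}|\widehat{\phi}|^2\omega_{2\Omega}\,dx\Big)^{1/2}\Big(\int_{2\Omega}\frac{|\hat f|^2}{\omega_{2\Omega}}\,dx\Big)^{1/2}.$$
The set $2\Omega$ is convex and contains no affine lines, so Theorem \ref{helson1} applies to it and bounds the second factor by $C(n)^{1/2}\|f\|_{L^1}$. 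Hence
$$|L(f)|\leq 2^{n/2}C(n)^{1/2}\|\Ha_\phi\|_{S^2}\|f\|_{L^1}.$$
Since $\mathcal{F}^{-1}C_c^\infty(2\Omega)$ is dense in $\PW^1(2\Omega)$ by \cite[Lemma 2.6]{bampouras2026boundaryweightedfourierinequalitiesconvex}, $L$ extends to a bounded functional on $\PW^1(2\Omega)$. Because $\PW^1(2\Omega)$ is a closed subspace of $L^1(\mathbb{R}^n)$, Hahn--Banach extends $L$ to all of $L^1$ with the same norm. The extension is represented by some $\psi\in L^\infty$:
$$L(f)=\int\overline{f}\,\psi\,dx\quad\text{for all } f\in L^1,\qquad \|\psi\|_\infty\lesssim\|\Ha_\phi\|_{S^2}.$$

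It remains to identify $\widehat{\psi}$ with $\widehat{\phi}$ on $2\Omega$. For $f\in\mathcal{F}^{-1}C_c^\infty(2\Omega)$, Parseval in the distributional sense gives
$$\int\overline{f}\,\psi=\langle\widehat{\psi},\hat f\rangle .$$
Comparing with the definition of $L$, the distributions $\widehat{\psi}$ and $\widehat{\phi}$ agree when tested against every $C_c^\infty(2\Omega)$ function. So they coincide on the open set $2\Omega$. The Hankel operator only sees $\widehat{\phi}(x+y)$ for $x,y\in\Omega$, that is, values in $2\Omega$. Therefore $\Ha_\psi=\Ha_\phi$: first on the dense class $\mathcal{F}^{-1}C_c^\infty(\Omega)$, and then everywhere by boundedness.

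The main obstacle is making the first step rigorous. For a general distribution $\widehat{\phi}$, we must check that the Hilbert-Schmidt condition really means $\widehat{\phi}|_{2\Omega}\in L^2_{\mathrm{loc}}$ with the weighted integral finite, and that $L$ is well defined. I would argue this through the kernel: the $S^2$ kernel $(x,y)\mapsto\widehat{\phi}(x+y)$ lies in $L^2(\Omega\times\Omega)$. Changing variables to $u=x+y$ then identifies $\widehat{\phi}$ on $2\Omega$ with a function satisfying the weighted bound, which is exactly the Hilbert-Schmidt norm formula stated before the theorem. The remaining care is with conjugation and sign conventions in the pairing.
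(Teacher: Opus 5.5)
Your proposal is correct and follows essentially the same route as the paper. You apply Cauchy--Schwarz against the weight $\omega_{2\Omega}$ and Theorem \ref{helson1} on $2\Omega$ to bound $f\mapsto\langle f,\phi\rangle$ on $\PW^1(2\Omega)$ by $C\|\Ha_\phi\|_{S^2}\|f\|_{L^1}$, then use Hahn--Banach and $L^1$--$L^\infty$ duality to get $\psi$ with $\widehat{\psi}=\widehat{\phi}$ on $2\Omega$; your extra care about why $\widehat{\phi}|_{2\Omega}$ is a function satisfying the weighted $L^2$ bound addresses a point the paper takes for granted via the Hilbert--Schmidt norm formula.
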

\begin{proof}
	Let $f\in \mathcal{F}^{-1}C_c^\infty(2\Omega).$ Then, by Theorem \ref{helson1} and the Cauchy-Schwarz inequality,
	$$|\langle f,\phi\rangle|=|\int_{2\Omega}\widehat{\phi}(x)\sqrt{\omega_{2\Omega}(x)}\frac{\hat{f}(x)}{\sqrt{\omega_{2\Omega}(x)}}dx|\leq C_n\|\Ha_\phi\|_{S^2}\|f\|_{L^1}.$$ Therefore, if $\Ha_\phi\in S^2$, the functional $Tf=\langle f,\phi\rangle$ is a bounded functional on $\PW^1(2\Omega)$, and thus by Hahn-Banach and the duality of $L^1$, there exists a bounded function $\psi$ such that $\langle f,\phi\rangle=\langle f,\psi\rangle$ for all $f\in \PW^1(2\Omega).$ Equivalently $\widehat{\phi}=\widehat{\psi}$ on $2\Omega$ as desired.
\end{proof}

In \cite[Section 7.2]{bampouras2026boundaryweightedfourierinequalitiesconvex}, the Hilbert matrix was introduced to provide a new proof of the failure of the Nehari theorem for the ball. The Hilbert matrix is defined as the operator $\mathcal{H}_\Omega:L^2(\Omega)\to L^2(\Omega)$ by
$$\mathcal{H}_\Omega(f)(x)=\int_\Omega \dfrac{f(y)}{\omega_{2\Omega}(x+y)}dy\quad x\in \Omega.$$
We start by observing that this operator is always bounded.
\begin{prop}\label{Hilbert}
	Let $\Omega\subset\mathbb{R}^n$ be a convex set that does not contain affine lines. Then 
	$$\|\mathcal{H}_\Omega\|\leq\frac{2^n(2n)!\Gamma(n/2+1)^2}{(n!)^3}.$$
\end{prop}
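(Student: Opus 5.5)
The plan is to reuse the pointwise majorant obtained in the proof of Theorem \ref{helson1}, now applied to the set $2\Omega$. That proof showed, for bounded $\Omega$ and $z\in 2\Omega$,
$$\frac{1}{\omega_{2\Omega}(z)}\leq \frac{2^n n!}{c_n\pi^n}\,\widehat{\Psi_{2\Omega}}(z),$$
with $\widehat{\Psi_{2\Omega}}(z)=m((2\Omega-z)^\ast)\ge 0$ by Lemma \ref{psilemma}. The kernel of $\mathcal{H}_\Omega$ is therefore nonnegative and dominated pointwise by a constant multiple of the Hankel kernel $\widehat{\Psi_{2\Omega}}(x+y)$. Since $\mathcal{H}_\Omega$ has a positive kernel, we may restrict to $f\ge 0$ when estimating $|\langle \mathcal{H}_\Omega f,g\rangle|$, because replacing $f,g$ by $|f|,|g|$ only increases the pairing. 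This gives
$$|\langle \mathcal{H}_\Omega f,g\rangle|\leq \frac{2^n n!}{c_n\pi^n}\int_\Omega\int_\Omega \widehat{\Psi_{2\Omega}}(x+y)|f(y)||g(x)|\,dy\,dx.$$

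The right-hand side is the bilinear form of the Hankel operator $\Ha_{\Psi_{2\Omega}}$ on $\PW(\Omega)$, evaluated at the functions $F,G\in\PW(\Omega)$ with $\hat F=|f|$ and $\hat G=|g|$ (up to a conjugation/reflection convention). Its norm is at most $\|\Ha_{\Psi_{2\Omega}}\|\le\|\Psi_{2\Omega}\|_{L^\infty}=\frac{(2\pi)^n}{n!}$. Multiplying, we get
$$\|\mathcal{H}_\Omega\|\le \frac{2^n n!}{c_n\pi^n}\cdot\frac{(2\pi)^n}{n!}=\frac{4^n}{c_n},$$
and $\frac{4^n}{c_n}$ is exactly the stated constant. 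More concretely, with $\check{}$ denoting the inverse Fourier transform, the double integral equals $\langle \Psi_{2\Omega}, \overline{\check{|f|}}\,\overline{\check{|g|}}\rangle$ or a similar expression. Its product factor $\check{|f|}\,\check{|g|}$ lies in $L^1$ with norm at most $\||f|\|_{L^2}\||g|\|_{L^2}$ by Cauchy–Schwarz and Plancherel, and its Fourier transform is supported in $2\Omega$.

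Two reductions come before this. First, for unbounded $\Omega$ we approximate by $\Omega_r=\Omega\cap B(0,r)$ as in Lemma \ref{unbounded}. For $f,g$ supported in $\Omega_r$ we have $\omega_{2\Omega_r}\le\omega_{2\Omega}$, so the kernel of $\mathcal{H}_\Omega$ restricted to $\Omega_r$ is dominated by that of $\mathcal{H}_{\Omega_r}$. Positivity then gives $|\langle\mathcal{H}_\Omega f,g\rangle|\le\|\mathcal{H}_{\Omega_r}\|\,\|f\|\,\|g\|$, and density finishes. Second, we may take $f,g\in C_c^\infty(\Omega)$, nonnegative after the reduction above, so that the pairing with the distribution $\widehat{\Psi_{2\Omega}}$ makes sense.

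The main obstacle is justifying the identity between the distributional pairing and the integral against the function $m((2\Omega-z)^\ast)$ on $2\Omega$. Lemma \ref{psilemma} only identifies $\widehat{\Psi}$ on the open set, as a limit of $\widehat{\Psi_{\epsilon}}$. To handle this, I would work with $\Psi_{2\Omega,\epsilon}$, whose sup norm is also at most $\frac{(2\pi)^n}{n!}$. I would use the locally uniform convergence $\widehat{\Psi_{2\Omega,\epsilon}}\to m((2\Omega-\cdot)^\ast)$ on compact subsets of the interior $2\Omega$; this convergence comes from the spherical formula together with $h_{2\Omega-z}(\theta)$ being bounded below on compacts. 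Passing to the limit $\epsilon\to0$ in the pairing, with $|f|*|g|$ compactly supported in $2\Omega$, then gives the bound.
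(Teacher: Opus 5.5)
Your proposal is correct and follows essentially the same route as the paper. You dominate the kernel $1/\omega_{2\Omega}(x+y)$ on the truncations $\Omega_r$ by $\frac{2^n n!}{c_n\pi^n}\widehat{\Psi_{2\Omega_r}}(x+y)$, bound the resulting positive Hankel form by $\|\Psi_{2\Omega_r}\|_{L^\infty}=(2\pi)^n/n!$, and conclude by density. Your extra $\epsilon$-regularization through $\Psi_{2\Omega,\epsilon}$, with locally uniform convergence on compact subsets of $2\Omega$, just makes explicit the identification of the distributional pairing that the paper leaves implicit.
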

\begin{proof}
	By the proof of Theorem \ref{helson1}, if $\Omega_r=\Omega\cap B(0,r)$ we have that
	$$\frac{1}{\omega_{2\Omega_r}(x)}\leq \frac{2^n n!}{c_n\pi^n}\widehat{\Psi_{2\Omega_r}}(x), \quad x\in 2\Omega_r.$$
	Therefore, for $f,g\in L^2(\Omega_r)$,
	\begin{eqnarray} 
		|\langle \mathcal{H}_\Omega f,g\rangle|& \leq&\frac{2^n n!}{c_n\pi^n} \int_{\Omega_r}\int_{\Omega_r} |f(x)||g(y)|\widehat{\Psi_{2\Omega_r}}(x+y)dxdy \nonumber \\ &\leq& \frac{2^n n!}{c_n\pi^n}\|\Psi_{2\Omega_r}\|_{L^\infty}\|f\|_{L^2}\|g\|_{L^2}.\nonumber 
	\end{eqnarray}
	The proof is complete since $\bigcup_{r>0} L^2(\Omega_r)$ is dense in $L^2(\Omega)$.
\end{proof}

We will finish this section by observing that \cite[Lemma 7.5]{bampouras2026boundaryweightedfourierinequalitiesconvex} combined with Proposition \ref{Hilbert} shows that Nehari's theorem implies Hardy's inequality, i.e. the boundary-weighted inequality with $(p,q,d)=(1,1,1)$.
\begin{lm}
	Nehari's theorem implies Hardy's inequality, i.e.
	$$\int_\Omega\dfrac{|\hat{f}(x)|}{\omega_\Omega(x)}dx\leq C(\Omega)\|f\|_{L^1},\quad f\in \PW^1(\Omega).$$
\end{lm}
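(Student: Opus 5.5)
We will show that Nehari's theorem for $\PW(\Omega)$ yields Hardy's inequality, via duality and the Hilbert matrix $\mathcal{H}_\Omega$ of Proposition \ref{Hilbert}. By "Nehari's theorem" we mean: every bounded Hankel operator $\Ha_\phi$ on $\PW(\Omega)$ (equivalently on $\PW(\tfrac12\Omega)$ after rescaling) equals $\Ha_\psi$ for some $\psi\in L^\infty$ with $\|\psi\|_{L^\infty}\leq C\|\Ha_\phi\|$. The constant $C$ comes from the open mapping theorem applied to the surjection $L^\infty\to\{\text{bounded Hankel operators}\}$.

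\textbf{Step 1: choose the right symbol.} Consider the symbol whose Fourier transform on $2\Omega$ is
$$\widehat{\phi}(x)=\frac{u(x)}{\omega_{2\Omega}(x)},$$
where $u$ is an arbitrary measurable function with $|u|\leq 1$, e.g. $u=\overline{\hat f}/|\hat f|$ for a given $f$. The kernel of $\Ha_\phi$ is then dominated in modulus by that of $\mathcal{H}_\Omega$, $(\omega_{2\Omega}(x+y))^{-1}$. Running the proof of Proposition \ref{Hilbert} with absolute values inside gives
$$\|\Ha_\phi\|\leq \frac{2^n(2n)!\Gamma(n/2+1)^2}{(n!)^3}$$
uniformly in $u$. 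Some care is needed to make $\widehat\phi$ a genuine distribution. We will truncate to $\Omega_r=\Omega\cap B(0,r)$ with $u$ compactly supported in $2\Omega$; then $\widehat\phi$ is bounded there, because $\omega_{2\Omega}$ is bounded below on compact subsets of the interior.

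\textbf{Step 2: apply Nehari and pair with $f$.} Nehari's theorem gives $\psi\in L^\infty$ with $\widehat\psi=\widehat\phi$ on $2\Omega$ and $\|\psi\|_\infty\leq C'$ independent of $u$. Take $f\in\mathcal{F}^{-1}C_c^\infty(2\Omega)$, with the phase of $u$ chosen against $\hat f$. Then
$$\int_{2\Omega}\frac{|\hat f(x)|}{\omega_{2\Omega}(x)}dx=\int \widehat\phi\,\hat f=\langle f,\bar\psi\rangle\leq C'\|f\|_{L^1}.$$
Density of $\mathcal{F}^{-1}C_c^\infty(2\Omega)$ in $\PW^1(2\Omega)$, cited earlier from \cite[Lemma 2.6]{bampouras2026boundaryweightedfourierinequalitiesconvex}, together with monotone convergence, extends this to all of $\PW^1(2\Omega)$. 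Finally, dilating $2\Omega\to\Omega$ uses $\omega_{2\Omega}(2x)=2^n\omega_\Omega(x)$, which gives the stated inequality on $\Omega$.

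\textbf{Main obstacle.} The delicate point is the uniform operator-norm bound for the symbol with a rough phase $u$. Proposition \ref{Hilbert} is stated for the positive kernel, so we must check that its proof only uses $|\text{kernel}|\leq (2^nn!/(c_n\pi^n))\,\widehat{\Psi}(x+y)$ with $\widehat\Psi\ge0$ a positive-definite-type kernel of norm at most $\|\Psi\|_\infty$. This domination does pass to $u$-modulated kernels by the triangle inequality. We also need the constant in Nehari's theorem to be uniform; this is where \cite[Lemma 7.5]{bampouras2026boundaryweightedfourierinequalitiesconvex} is used, and we take it to encode exactly this duality step.
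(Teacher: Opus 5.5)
Your argument is correct and is essentially the paper's: the paper obtains the statement by combining Proposition \ref{Hilbert} (boundedness of the Hilbert matrix, whose positive kernel dominates every Hankel kernel $u(x+y)/\omega_{2\Omega}(x+y)$ with $|u|\leq 1$) with \cite[Lemma 7.5]{bampouras2026boundaryweightedfourierinequalitiesconvex}, which is cited for the Nehari-plus-pairing duality step you carry out, with the uniform Nehari constant coming from the open mapping theorem as you say. Two small remarks: the ``main obstacle'' you flag is immediate from $|\langle Kf,g\rangle|\leq\langle \mathcal{H}_\Omega|f|,|g|\rangle$, and the extension to all of $\PW^1(2\Omega)$ should use Fatou's lemma, via uniform convergence of $\hat f_k$, rather than monotone convergence.
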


\section{Schatten class Hankel Operators}\label{Schatten}
In this section we are going to completely characterize Schatten class Hankel operators for Paley-Wiener spaces of a polyhedron. In order to present the characterization, we need some definitions.
\subsection*{Admissible decomposition.} Let $\Omega\subset\mathbb{R}^n$ be a convex set that does not contain affine lines. For $a>1$, an $a$-admissible decomposition of $\Omega$ is a collection of parallelepipeds $\{A_j^i\}_{(j,i)\in I}$, $I\subset \mathbb{Z}^2$ that satisfy the axioms
\begin{enumerate}
	\item There exists $M>0$ with $A_j^i\subset \omega_\Omega^{-1}(a^{j-M},a^{j+M}).$
	\item There exist $c_1,c_2>0$ such that $c_1a^j\leq m(A_j^i)\leq c_2 a^j.$
	\item There exists $\epsilon\in (0,1)$ such that
	$$\Omega= \bigcup_{(j,i)\in I}\epsilon A_j^i,$$ where $\epsilon A_j^i$ is the dilation of $A_j^i$ by $\epsilon$ with respect to its center.
	\item There exists $C>0$ such that whenever $A_j^i\cap A_k^l\neq \emptyset$, it holds that 
	$$T_{k,l}A_j^i\subset C(-1,1)^n,$$ where $T_{k,l}$ is the affine bijection that maps $A_k^l$ onto $(-1,1)^n$.
	\item There exists $N>0$ such that for any choice $(j,i),(k,l)\in I$, $$\#\{(\beta,\gamma):A_\beta^\gamma \cap \frac{1}{2}(A_j^i+A_k^l)\neq\emptyset\}\leq N.$$
\end{enumerate}

The existence of such a decomposition of $\Omega$ ensures that we can find a family of smooth functions $\phi_j^i\in \mathcal{F}^{-1}C_c^\infty(\mathbb{R}^n),$ satisfying 
\begin{enumerate}
	\item $\supp\widehat{\phi}_j^i\subset A_j^i$,
	\item $\sup_{(j,i)\in I}\|\phi_j^i\|_{L^1}<\infty,$ and
	\item $$\sum_{(j,i)\in I}\widehat{\phi}_j^i(x)=\chi_\Omega(x).$$
\end{enumerate}
The collection of such families of smooth functions will be denoted by $\mathscr{P}(A_j^i)$. \cite[Proposition 2.6]{bampouras2024besovspacesschattenclass} and \cite[Theorem 1.9]{bampouras2026boundaryweightedfourierinequalitiesconvex} ensure that the above always exist.
\begin{theo}
	Let $\Omega\subset\mathbb{R}^n$ be a convex set that does not contain affine lines and let $a>1$. Then $\Omega$ is $a$-admissible and for any admissible decomposition $A_j^i$, $\mathscr{P}(A_j^i)\neq \emptyset.$
\end{theo}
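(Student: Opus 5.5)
The statement has two parts: every such $\Omega$ admits an $a$-admissible decomposition for each $a>1$, and every admissible decomposition carries a family in $\mathscr{P}(A_j^i)$. The paper points to \cite[Theorem 1.9]{bampouras2026boundaryweightedfourierinequalitiesconvex} for existence and to \cite[Proposition 2.6]{bampouras2024besovspacesschattenclass} for the partition of unity, so the plan is mainly to check that those results fit the five axioms as written here. Below is how I would build both parts directly.

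\textbf{Existence.} I reduce to bounded $\Omega$ only where compactness is needed, since the decomposition itself is local. For $x\in\Omega$ let $K_x=(\Omega-x)\cap(x-\Omega)$, a centrally symmetric convex body with $m(K_x)=\omega_\Omega(x)$. By John's theorem there is a parallelepiped (image of a cube) $Q_x$ centered at $0$ with $c_nQ_x\subset K_x\subset Q_x$. I take the cells to be $x+\delta Q_x$ for a small fixed $\delta$.

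The key geometric fact to prove is that $\omega_\Omega$ is comparable on such cells. If $y\in x+\delta K_x$, then by convexity
\[ K_y\supset (1-\delta)K_x \quad\text{and}\quad K_y\subset (1+\delta)/(1-\delta)\,K_x. \]
Hence the affine frames $T_x$, normalising $x+Q_x$ to $(-1,1)^n$, are uniformly comparable for nearby points. This comparability gives axioms (1), (2) and (4) at once, after grouping the cells by $j=\lfloor\log_a\omega_\Omega(x)\rfloor$.

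I then extract a maximal separated family of centres $x_j^i$ with respect to the quasi-metric $\rho(x,y)=\|T_x(y)-T_x(x)\|$ and set $A_j^i=x_j^i+\delta Q_{x_j^i}$. Maximality gives the covering (3) with $\epsilon<1$. Bounded overlap follows from a volume-packing argument in the normalised frame.

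Axiom (5) is the main obstacle. I need to show that $\frac12(A_j^i+A_k^l)$ meets only boundedly many cells. Its centre $z=\frac12(x+y)$ satisfies $K_z\supset\frac12(K_x+K_y)$ by concavity, so the sumset is contained in a bounded dilate of $z+Q_z$. Packing in the frame $T_z$ then bounds the count. To make this work I would first try to prove that the inclusion $\frac12(x+\delta Q_x)+\frac12(y+\delta Q_y)\subset z+C\,Q_z$ holds uniformly, using $K_z\supset\frac12(K_x+K_y)$ together with $Q\subset c_n^{-1}K$.

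\textbf{Partition of unity.} I fix a bump $\eta\in C_c^\infty((-1,1)^n)$ with $\eta>0$ on $\epsilon'$-cubes for some $\epsilon<\epsilon'<1$, and set $\eta_j^i=\eta\circ T_{j,i}$. Then $\|\mathcal F^{-1}\eta_j^i\|_{L^1}=\|\mathcal F^{-1}\eta\|_{L^1}$, because affine changes preserve $L^1$ norms of inverse transforms up to modulation and dilation. Next I define
\[ \widehat{\phi}_j^i=\eta_j^i\Big/\sum_{(k,l)}\eta_k^l \]
on $\Omega$. By axiom (3) the denominator is bounded below. By axiom (4) and bounded overlap, in each frame $T_{j,i}$ the denominator is a finite sum of smooth functions with uniformly bounded derivatives. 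So $\widehat{\phi}_j^i\circ T_{j,i}^{-1}$ lies in a bounded subset of $C_c^\infty$, and therefore $\sup\|\phi_j^i\|_{L^1}<\infty$. The sum of the $\widehat{\phi}_j^i$ is $\chi_\Omega$ by construction.
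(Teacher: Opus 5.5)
Your proposal is correct. It necessarily takes a different route from the paper, because the paper gives no argument of its own: it only refers to \cite[Proposition 2.6]{bampouras2024besovspacesschattenclass} and \cite[Theorem 1.9]{bampouras2026boundaryweightedfourierinequalitiesconvex}. You instead build everything by hand. Your cells are dilated John parallelepipeds of the symmetric bodies $K_x=(\Omega-x)\cap(x-\Omega)$. All five axioms then reduce to two elementary convexity inclusions:
\[ (1-\delta)K_x\subset K_y\subset(1+\delta)K_x \quad (y\in x+\delta K_x), \qquad K_{(x+y)/2}\supset\tfrac12(K_x+K_y). \]
Your constant $\frac{1+\delta}{1-\delta}$ in the first inclusion is also valid. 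The partition of unity comes from normalising affine transplants of a single bump. The citation route is short. Yours is self-contained, and it shows that the constants $M,c_1,c_2,\epsilon,C,N$ of the constructed decomposition depend only on $n$ and $a$. It also shows that boundedness of $\Omega$ plays no role, so the reduction to bounded $\Omega$ you mention is unnecessary.

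A few points to tighten.
\begin{enumerate}
\item Since $Q_x\subset c_n^{-1}K_x$, with $c_n$ your John constant, the dilation parameter that enters your comparability lemma is $\delta/c_n$, not $\delta$. So $\delta$ must be small relative to a power of $c_n$.
\item Axiom (5), which you call the main obstacle, is immediate. With $z=\frac12(x+y)$,
\[ \tfrac{\delta}{2}(Q_x+Q_y)\subset\tfrac{\delta}{2c_n}(K_x+K_y)\subset\tfrac{\delta}{c_n}K_z\subset\tfrac{\delta}{c_n}Q_z . \]
Once $\delta/c_n^2<1$, every cell meeting $z+\frac{\delta}{c_n}Q_z$ has a centre $x'$ with $K_{x'}$ comparable to $K_z$. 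Packing in the frame of $z$ then bounds the count.
\item The second claim concerns an arbitrary admissible decomposition, not just yours. So bounded overlap, and with it local finiteness and smoothness of $\sum_{(k,l)}\eta_k^l$, must be derived from the axioms. It follows from axiom (5) with $(k,l)=(j,i)$, because $\frac12(A_j^i+A_j^i)=A_j^i$.
\item Axiom (1) forces $A_j^i\subset\Omega$, since $\omega_\Omega$ vanishes off the open convex set $\Omega$. You should state this: it is what makes the zero extension of $\widehat{\phi}_j^i$ smooth and gives $\sum\widehat{\phi}_j^i=\chi_\Omega$ on all of $\mathbb{R}^n$.
\end{enumerate}
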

We can use this decomposition to define appropriate Besov spaces.
\subsection*{Besov spaces.} 
For $a>1$ and an $a$-admissible decomposition $A_j^i$ of a convex set $\Omega\subset\mathbb{R}^n$ that does not contain affine lines, we define the Besov space $B_{p,q}^s(\Omega)$, $1\leq p,q<\infty$, $s\in\mathbb{R}$ by the completion of
$$\{f\in \mathcal{F}^{-1}C_c^\infty(\Omega):(a^{js}\|f\ast\phi_j^i\|_{L^p})_{(j,i)\in I}\in\ell^q(j,i)\}$$ over the metric
$$\|f\|_{B_{p,q}^s(\Omega)}=\left(\sum_{j,i}a^{jsq}\|f\ast\phi_j^i\|_{L^p}^q\right)^{1/q}.$$
These Besov spaces have been used to characterize Schatten class Hankel operators, (see \cite{bampouras2024besovspacesschattenclass}), for example, for any $1\leq p\leq 2$ and any convex set $\Omega\subset\mathbb{R}^n$ not containing affine lines it holds that
$$\|\Ha_\phi\|_{S^p}\approx \|\phi\|_{B_{p,p}^{1/p}(2\Omega)}.$$
To extend this result to $p>2$ we will need to interpolate with $p=\infty$. To achieve that we need to introduce extended Hankel operators $\Ha_\phi^{\sigma,\tau}:\PW(\Omega)\to \PW(\Omega)$, $\sigma,\tau\in\mathbb{R}$ by
$$\widehat{\Ha_\phi^{\sigma,\tau}f}(x)=\int_{\Omega}\widehat{\phi}(x+y)\hat{f}(y)\omega_{\Omega}^\sigma(x)\omega_\Omega^\tau(y)dy,\quad x\in\Omega.$$

To get an endpoint result -- in order to be able to interpolate -- we only need to characterize the boundedness of the integral operator $K_{\sigma,\tau}:L^2(\Omega)\to L^2(\Omega)$ with kernel
$$K_{\sigma,\tau}(x,y)=\frac{\omega_\Omega^\sigma(x)\omega_{\Omega}^\tau(y)}{\omega_\Omega^{\sigma+\tau+1}(\frac{x+y}{2})}.$$ Indeed, \cite[Corollary 1.5, Lemma 3.9 and Theorem 3.10]{bampouras2024besovspacesschattenclass} together with interpolation yield the following.
\begin{prop}\label{polyhconseq}
	Let $P\subset\mathbb{R}^n$ be a polyhedron that does not contain affine lines and assume that for every $\sigma,\tau>-1/2$ the operator $K_{\sigma,\tau}$ is bounded. Then for $1\leq p<\infty$ and every
	$$\sigma,\tau>\max(-\frac{1}{2},-\frac{1}{p}), \quad \sigma+\tau>-\frac{1}{p},$$ it holds that
	$$\Ha_\varphi^{\sigma,\tau}\in S^p(\PW(P))
	\quad\Longleftrightarrow\quad
	\varphi\in B_{p,p}^{\sigma+\tau+1/p}(2P),$$
	with equivalence of norms.
\end{prop}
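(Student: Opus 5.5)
This proposition is obtained by complex interpolation between two endpoints, using the cited results from \cite{bampouras2024besovspacesschattenclass} as black boxes. Consider the analytic family $z\mapsto \Ha_\varphi^{\sigma(z),\tau(z)}$, with $\sigma(z),\tau(z)$ affine in $z$ on the strip $0\le \Re z\le 1$. The Besov scale $B^{s}_{p,p}(2P)$ is defined through the fixed admissible decomposition and the partition $\phi_j^i$. It is therefore a weighted $\ell^p$-sum of $L^p$ pieces, so it is a retract of $\ell^p_{a^{jsp}}(L^p)$ and interpolates in the standard way:
$$[B^{s_0}_{p_0,p_0},B^{s_1}_{p_1,p_1}]_\theta=B^{s}_{p,p},\qquad s=(1-\theta)s_0+\theta s_1,\quad \tfrac1p=\tfrac{1-\theta}{p_0}+\tfrac{\theta}{p_1},$$
with $B^s_{\infty,\infty}$ understood appropriately. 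Schatten classes interpolate likewise: $[S^{p_0},S^{p_1}]_\theta=S^p$.

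The first endpoint is $1\le p\le 2$. Here \cite[Corollary 1.5, Theorem 3.10]{bampouras2024besovspacesschattenclass} give the characterization $\Ha^{\sigma,\tau}_\varphi\in S^p\iff \varphi\in B^{\sigma+\tau+1/p}_{p,p}(2P)$ in the stated parameter range. In particular this holds at $p=1$ and $p=2$, for all $\sigma,\tau>-1/2$ with $\sigma+\tau>-1/p$.

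The second endpoint is $p=\infty$, and it is where the hypothesis on $K_{\sigma,\tau}$ enters. I would prove
$$\|\Ha_\varphi^{\sigma,\tau}\|\lesssim \|\varphi\|_{B^{\sigma+\tau}_{\infty,\infty}(2P)} \quad\text{for } \sigma,\tau>-1/2.$$
Decompose $\widehat\varphi=\sum\widehat\varphi*\widehat{\phi}$-pieces. Lemma 3.9 of the cited work, which is essentially a pointwise kernel estimate using axioms (1)–(5), bounds $|\widehat\varphi(x+y)|$ in the relevant averaged sense by $\|\varphi\|_{B^{\sigma+\tau}_{\infty,\infty}}\,\omega_{2P}(x+y)^{-\sigma-\tau-1}$. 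Since $\omega_{2P}(x+y)=2^n\omega_P(\frac{x+y}{2})$, this dominates the kernel of $\Ha^{\sigma,\tau}_\varphi$ by a constant times $K_{\sigma,\tau}$, and the assumed boundedness of $K_{\sigma,\tau}$ gives the upper bound. The reverse inequality at $p=\infty$ should follow by testing $\Ha^{\sigma,\tau}_\varphi$ against normalized bump functions adapted to each $A^i_j$, exactly as in the lower bound of Theorem 3.10.

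With both endpoints in hand, I would interpolate. Fix the target $(p,\sigma,\tau)$ with $2<p<\infty$, and choose $\theta=2/p$ together with an endpoint pair $(\sigma_0,\tau_0)$ at $p=2$ and $(\sigma_1,\tau_1)$ at $p=\infty$ such that
$$\sigma=(1-\theta)\sigma_1+\theta\sigma_0,\qquad \tau=(1-\theta)\tau_1+\theta\tau_0.$$
This is possible because the constraints $\sigma,\tau>-1/2$ and $\sigma+\tau>-1/p$ form an open set, and it is exactly the convex combination of the two endpoint regions. Stein interpolation for the analytic family gives the sufficiency direction, since the imaginary parts of the weights $\omega^{it}$ are unimodular. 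The necessity direction needs reverse inequalities. For these I would use duality: $S^p$ is dual to $S^{p'}$ with $p'<2$, and $B^{s}_{p,p}$ is dual to $B^{-s}_{p',p'}$ via the pairing $\langle \Ha^{\sigma,\tau}_\varphi,\Ha^{-\sigma',-\tau'}_\psi\rangle$-type trace identities, combined with the $p'\le2$ characterization.

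The main obstacle is this lower bound for $p>2$. I would first try to realize $\varphi\mapsto\Ha^{\sigma,\tau}_\varphi$ as a retract, that is, to find an operator $S\mapsto \varphi_S$ bounded $S^q\to B^{\sigma+\tau+1/q}_{q,q}$ for both $q=2$ and $q=\infty$ with $\varphi_{\Ha_\varphi}=\varphi$. This is done by averaging the kernel of $S$ over pairs $(x,y)$ with $x+y$ fixed, weighted by $\omega^{-\sigma}(x)\omega^{-\tau}(y)$. Its boundedness at $q=\infty$ again reduces to the boundedness of a $K$-type operator, which is why the hypothesis is stated for all $\sigma,\tau>-1/2$. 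Interpolating $S$ then yields the norm equivalence.
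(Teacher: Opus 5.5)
Your skeleton (the $p\le 2$ characterization from \cite{bampouras2024besovspacesschattenclass}, a $p=\infty$ endpoint whose input is the boundedness of $K_{\sigma,\tau}$, then interpolation) is the same as the paper's. The gap is the $p=\infty$ endpoint: as you formulate it, it is false on part of your range, and the mechanism you propose for it cannot work. A bound on $\sup_{j,i}a^{j(\sigma+\tau)}\|\varphi*\phi^i_j\|_{L^\infty}$ controls the frequency pieces in sup norm, not $\widehat\varphi$ pointwise. No estimate of the form $|\langle \Ha^{\sigma,\tau}_\varphi f,g\rangle|\le C\|\varphi\|_{B^{\sigma+\tau}_{\infty,\infty}}\langle K_{\sigma,\tau}|\hat f|,|\hat g|\rangle$, averaged or not, is available. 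Together with Theorem~\ref{Kpolyh} it would give $\|\Ha^{\sigma,\tau}_\varphi\|\lesssim\|\varphi\|_{B^{\sigma+\tau}_{\infty,\infty}}$ for all $\sigma,\tau>-1/2$, and this fails as soon as $\min(\sigma,\tau)\le 0$.

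To see this, take $P=(0,\infty)$, so $\omega_P(x)=2x$. Let $\hat f=\delta^{-1/2}\chi_{(\delta,2\delta)}$ and $\widehat\varphi=\sum_{k=0}^{M}a^{-k\sigma}\eta_\epsilon(\cdot-a^k)$, where $\eta_\epsilon\ge 0$ is a bump of width $\epsilon\ll\delta$ and integral one. Then $a^{j(\sigma+\tau)}\|\varphi*\phi_j\|_{L^\infty}\lesssim a^{j\tau}$, which stays bounded when $\tau\le 0$. On the other hand, $|\widehat{\Ha^{\sigma,\tau}_\varphi f}|\gtrsim\delta^{\tau-1/2}$ on $M+1$ disjoint intervals of length comparable to $\delta$, so $\|\Ha^{\sigma,\tau}_\varphi f\|_{L^2}^2\gtrsim M\delta^{2\tau}\to\infty$. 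The case $\sigma\le 0$ follows by taking adjoints, and for $\sigma=\tau=0$ this is just $B^0_{\infty,\infty}\supsetneq\mathrm{BMO}$.

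The same example shows that your parameter bookkeeping is off. For $p>2$ one has $\max(-\frac{1}{2},-\frac{1}{p})=-\frac{1}{p}$, so the proposition only claims the range $\sigma,\tau>-1/p$, $\sigma+\tau>-1/p$, not $\sigma,\tau>-1/2$. The larger range is false: with $\epsilon$ fixed and $-1/2<\tau<-1/p$, the quantity $\|\varphi\|_{B^{\sigma+\tau+1/p}_{p,p}}^p\lesssim\epsilon^{-1}\sum_{k}a^{k(\tau p+1)}$ stays bounded, while $\|\Ha^{\sigma,\tau}_\varphi\|_{S^p}\ge\|\Ha^{\sigma,\tau}_\varphi f\|_{L^2}\to\infty$.

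The endpoint you should aim for is $\sigma_1,\tau_1>0$; the example shows nothing larger is possible. This is exactly what the stated range reflects. With $\theta=2/p$, take $\sigma_0,\tau_0>-\frac{1}{2}$ with $\sigma_0+\tau_0>-\frac{1}{2}$, and $\sigma_1,\tau_1>0$. The points $\theta(\sigma_0,\tau_0)+(1-\theta)(\sigma_1,\tau_1)$ then fill out precisely $\{\sigma,\tau>-\frac{1}{p},\ \sigma+\tau>-\frac{1}{p}\}$.

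In the paper, the boundedness of $K_{\sigma,\tau}$ for all $\sigma,\tau>-1/2$ enters only as input to the cited Lemma 3.9 and Theorem 3.10. Those results supply the endpoint, which is then interpolated with Corollary 1.5. The boundedness of $K_{\sigma,\tau}$ is never used to dominate the Hankel kernel $\widehat\varphi(x+y)\omega_P^\sigma(x)\omega_P^\tau(y)$ pointwise.

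To turn your proposal into a proof, you need one of two things. Either invoke those results with endpoint parameters in the ranges above, or give an endpoint argument that genuinely uses the frequency localization of the pieces $\varphi*\phi^i_j$. Your retract for the necessity direction is reasonable in outline. It must, however, be run with the same corrected endpoint ranges, and its claimed reduction to a $K$-type operator is unverified.
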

Whenever we refer to a polyhedron $P\subset\mathbb{R}^n$ with no affine lines, we will also assume that $P$ can be written as
$$P=\{x\in\mathbb{R}^n:\ell_j(x)>0, 1\leq j\leq N\}, \quad \ell_j(x)=b_j-\langle a_j, x\rangle.$$
We begin by establishing an estimate for the $\omega_P$ function.
\begin{lm}\label{omegaformula}
	Let $P\subset\mathbb{R}^n$ be a polyhedron with no affine lines.
	Let $\mathcal{B}$ be the collection of all sets $I\subset \{1,...,N\}$ for which $\# I =n$ and the matrix $A_I$ with rows $a_i, i\in I$ is invertible, and set
	$$q_I(x)=\dfrac{\prod_{i\in I}\ell_i(x)}{|\det A_I|}.$$
	Then $$\left(\frac{2}{n}\right)^n \min_{I\in \mathcal{B}}q_I(x)\leq \omega_P(x)\leq 2^n \min_{I\in \mathcal{B}}q_I(x).$$
\end{lm}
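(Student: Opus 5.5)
Fix $x\in P$ and set $K=(P-x)\cap(x-P)$, so that $\omega_P(x)=m(K)$. Substituting $y=x+z$, the set $K$ is the symmetric convex body
$$K=\{z\in\mathbb{R}^n: |\langle a_j,z\rangle|<\ell_j(x),\ 1\le j\le N\},$$
since $\ell_j(x+z)=\ell_j(x)-\langle a_j,z\rangle$ and $\ell_j(x-z)=\ell_j(x)+\langle a_j,z\rangle$. The plan is to bound this intersection of $N$ symmetric slabs from above and below by parallelepipeds built from $n$ of the slabs.

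\textbf{Upper bound.} Take any $I\in\mathcal{B}$. Then $K$ is contained in the parallelepiped $\Pi_I=\{z:|\langle a_i,z\rangle|<\ell_i(x),\ i\in I\}$. The linear map $z\mapsto A_Iz$ sends $\Pi_I$ onto the box $\prod_{i\in I}(-\ell_i(x),\ell_i(x))$, which gives
$$m(\Pi_I)=\frac{2^n\prod_{i\in I}\ell_i(x)}{|\det A_I|}=2^nq_I(x).$$
Minimizing over $I$ yields $\omega_P(x)\le 2^n\min_{I\in\mathcal{B}}q_I(x)$. We also need $\mathcal{B}\neq\emptyset$. This holds because $P$ contains no lines, so the $a_j$ span $\mathbb{R}^n$.

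\textbf{Lower bound.} This is the main step. Write $K=\{z:\|Tz\|_\infty<1\}$, where $T:\mathbb{R}^n\to\mathbb{R}^N$ has rows $a_j/\ell_j(x)$. Let $v_j=a_j/\ell_j(x)$ and choose $I\in\mathcal{B}$ maximizing $|\det(v_i)_{i\in I}|$; this is a John/Auerbach-type choice of basis. Let $u_1,\dots,u_n$ be the dual basis to $\{v_i\}_{i\in I}$. By Cramer's rule, each $\langle v_j,u_k\rangle$ is a ratio of two determinants: the numerator is the determinant obtained by replacing $v_{i_k}$ with $v_j$. By maximality, $|\langle v_j,u_k\rangle|\le 1$ for all $j$ and $k$. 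Now take $z=\sum_k t_ku_k$ with $\sum_k|t_k|<1$. Then
$$|\langle v_j,z\rangle|\le\sum_k|t_k|\,|\langle v_j,u_k\rangle|<1,$$
so the cross-polytope $\conv(\pm u_k)$ lies in $K$. Its volume is
$$\frac{2^n}{n!}|\det(u_k)|=\frac{2^n}{n!}\cdot\frac{1}{|\det(v_i)_{i\in I}|}=\frac{2^n}{n!}\,q_I(x).$$
This already gives $\omega_P(x)\ge\frac{2^n}{n!}\min_{I\in\mathcal{B}}q_I(x)$.

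The stated constant $(2/n)^n$ instead follows from the cube $\{|t_k|<1/n\}$, of volume $(2/n)^n q_I(x)$, which lies in the cross-polytope. Since $(2/n)^n\le 2^n/n!$, the claimed inequality holds, and I would present the simpler cube version.

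The one obstacle I anticipate is justifying the maximizer. It exists because $\mathcal{B}$ is finite and nonempty, and because $\ell_j(x)>0$ for $x\in P$, so the normalization by $\ell_j(x)$ is legitimate. The Cramer's-rule step needs checking when $j\in I$, but then $\langle v_j,u_k\rangle\in\{0,1\}$ and the bound is trivial.
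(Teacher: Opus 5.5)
Your proof is correct and is essentially the paper's argument. The upper bound comes from $K$ lying inside each parallelepiped $\{|\langle v_i,z\rangle|<1,\ i\in I\}$. The lower bound comes from choosing $I$ to maximize $|\det(v_i)_{i\in I}|$, using Cramer's rule to get coefficients of modulus at most $1$, and inscribing the box $\{|\langle v_i,z\rangle|<1/n,\ i\in I\}$ in $K$. Your cross-polytope observation goes slightly beyond the paper: it improves the lower constant from $(2/n)^n$ to $2^n/n!$. Your remarks on $\mathcal{B}\neq\emptyset$ and on the case $j\in I$ supply details the paper leaves implicit.
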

\begin{proof}
	For $x\in P$, let $v_j=a_j/\ell_j(x).$ A simple observation shows that
	$$M_P(x)-x=\{u\in\mathbb{R}^n:|\langle v_j,u\rangle|<1, 1\leq j\leq N\},$$ where $M_P(x)=P\cap (2x-P)$. Thus,
	$$M_P(x)-x\subset \{u\in\mathbb{R}^n:|\langle v_i,u\rangle|<1, i\in I\},$$ for every $I\in\mathcal{B}$. Since $ \{u\in\mathbb{R}^n:|\langle v_i,u\rangle|<1, i\in I\}$ has measure $2^n/|\det(v_i)_{i\in I}|$ we get that
	$$\omega_P(x)=m(M_P(x)-x)\leq \min_{I\in\mathcal{B}}2^n\frac{1}{|\det(v_i)_{i\in I}|}=2^n\min_{I\in\mathcal{B}}q_I(x).$$ Now let us fix $I'$ such that $$|\det(v_i)_{i\in I'}|=\max_{I\in \mathcal{B}} |\det(v_i)_{i\in I}|,$$ and let us write $v_j=\sum_{i\in I'}c_{j,i}v_i$, $1\leq j\leq N$. Cramer's rule and maximality give $|c_{j,i}|\leq 1$ and thus
	$$\{u\in\mathbb{R}^n:|\langle v_i,u\rangle|<1/n, i\in I'\}\subset \{u\in\mathbb{R}^n:|\langle v_j,u\rangle|<1, 1\leq j \leq N\}.$$ Taking measures gives us the lower bound.
\end{proof}
Let us now introduce some notation.
$$d\mu(x)=\frac{dx}{\omega_P(x)},\quad \quad R(x,y)=\frac{\sqrt{\omega_P(x)\omega_P(y)}}{\omega_P((x+y)/2)}$$
and the logarithmic metric
$$d_P(x,y)=\max_{1\leq j\leq N}\left|\log\frac{\ell_j(x)}{\ell_j(y)}\right|.$$

These quantities satisfy the following properties.
\begin{lm}\label{metricestimates}
	Let $P\subset\mathbb{R}^n$ be a polyhedron with no affine lines. Then, for any $y\in P,$ $r>0$,
	$$\mu(\{x\in P:d_{P}(x,y)<r\})\leq \binom{N}{n}n^n r^n.$$
	Moreover, 
	$$R(x,y)\leq 2^n\sqrt{n} e^{-d_P(x,y)/2}, \quad x,y\in P.$$
\end{lm}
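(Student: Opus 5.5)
Both estimates come from Lemma \ref{omegaformula}, which lets us replace $\omega_P$ by $\min_{I\in\mathcal{B}}q_I$ up to factors depending only on $n$.

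\textbf{Measure of balls.} Since $\min_I q_I\geq q_J$ for each $J\in\mathcal{B}$, we have $\frac{1}{\omega_P(x)}\leq (n/2)^n\max_I \frac{1}{q_I(x)}\leq (n/2)^n\sum_{I\in\mathcal{B}}\frac{1}{q_I(x)}$. Thus
\[
\mu(\{d_P(x,y)<r\})\leq (n/2)^n\sum_{I\in\mathcal{B}}\int_{\{d_P(x,y)<r\}}\frac{|\det A_I|}{\prod_{i\in I}\ell_i(x)}\,dx .
\]
For a fixed $I$, change variables to $t_i=\log \ell_i(x)$, $i\in I$. This map is an affine bijection followed by coordinatewise logarithms, and its Jacobian exactly cancels the density: $dx = \prod_{i\in I}\ell_i(x)\,\frac{dt}{|\det A_I|}$. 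The region $\{d_P(x,y)<r\}$ is contained in $\{|t_i-\log\ell_i(y)|<r,\ i\in I\}$, which has Lebesgue measure $(2r)^n$. Each summand is therefore at most $(2r)^n$. Since $\#\mathcal{B}\leq\binom{N}{n}$, the total is at most $\binom{N}{n}(n/2)^n(2r)^n=\binom{N}{n}n^nr^n$.

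\textbf{Decay of $R$.} Let $z=(x+y)/2$. Each $\ell_j$ is affine, so $\ell_j(z)=\frac{\ell_j(x)+\ell_j(y)}{2}$. By the AM--GM inequality, $\sqrt{\ell_j(x)\ell_j(y)}\leq\ell_j(z)$, and more precisely
\[
\frac{\sqrt{\ell_j(x)\ell_j(y)}}{\ell_j(z)}=\operatorname{sech}\Bigl(\tfrac12\log\tfrac{\ell_j(x)}{\ell_j(y)}\Bigr)\leq 2e^{-|\log(\ell_j(x)/\ell_j(y))|/2}.
\]
Choose $I\in\mathcal{B}$ minimizing $q_I(z)$. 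Then $\omega_P(z)\geq(2/n)^nq_I(z)$, while $\omega_P(x)\leq 2^nq_I(x)$ and $\omega_P(y)\leq2^nq_I(y)$, using the same $I$ in both upper bounds. This gives
\[
R(x,y)\leq n^n\prod_{i\in I}\frac{\sqrt{\ell_i(x)\ell_i(y)}}{\ell_i(z)}.
\]
Every factor is at most $1$. Bounding only the factor with the largest log-ratio among $i\in I$ by $2e^{-|\cdot|/2}$ yields decay in $\max_{i\in I}|\log\ell_i(x)/\ell_i(y)|$.

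\textbf{Main obstacle.} Two gaps remain. First, the maximum over $i\in I$ must be upgraded to the maximum over all $j\leq N$, i.e.\ to $d_P$. Second, the constant must be brought down to $2^n\sqrt n$. For the first, I would use the maximal-determinant choice $I'$ from the proof of Lemma \ref{omegaformula}, taken at the point $z$. Writing $v_j=a_j/\ell_j(z)$ as $\sum_{i\in I'}c_{j,i}v_i$ with $|c_{j,i}|\leq1$, we can express each $\ell_j(x)/\ell_j(z)-1$ as a combination of the quantities $\ell_i(x)/\ell_i(z)-1$, $i\in I'$, and similarly for $y$. Note that $\ell_j(x)/\ell_j(z)+\ell_j(y)/\ell_j(z)=2$ for every $j$. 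So a large ratio $\ell_j(x)/\ell_j(y)$, which forces one of $\ell_j(x)/\ell_j(z)$, $\ell_j(y)/\ell_j(z)$ to be close to $0$, should force some $i\in I'$ to have a comparably unbalanced ratio.

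For the constant, I would bound $\omega_P(z)$ from below directly by $\prod_{i\in I'}\frac{2}{n}\ell_i(z)$ in terms of the $I'$-coordinates. I would also compare the products via $\sqrt{ab}\leq\frac{a+b}{2}$ applied to the whole vector $(\ell_i(x)/\ell_i(z))_{i\in I'}$ rather than factor by factor. I expect this transfer from $I'$ to all $N$ facets, with a constant uniform in $n$ only, to be the delicate step.
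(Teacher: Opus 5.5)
Your estimate for $\mu$ of the $d_P$-balls is correct and is the paper's argument; summing $1/q_I$ over $\mathcal{B}$ instead of splitting $P$ into the sets $E_I$ changes nothing. The decay of $R$, however, is not proved, and the mechanism you propose for the first gap fails.

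Put $z=(x+y)/2$ and $s_k=\langle a_k,(x-y)/2\rangle/\ell_k(z)$. Then $\ell_k(x)=(1-s_k)\ell_k(z)$, $\ell_k(y)=(1+s_k)\ell_k(z)$, and $t=\max_k|s_k|=|s_j|$ satisfies $d_P(x,y)=\log\frac{1+t}{1-t}$. The Cramer bound $|c_{j,i}|\le 1$ only yields $\max_{i\in I'}|s_i|\ge t/n$, and this is sharp. Take the triangle $P=\{x_1>0,\ x_2>0,\ x_1+x_2<1\}$ and $z=(1/3,1/3)$. All index sets are tied there, so by symmetry take $I'=\{1,2\}$ (or move $z$ slightly towards the origin to make this choice unique). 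Let $x=z+(\delta,\delta)$, $y=z-(\delta,\delta)$ with $\delta\uparrow 1/6$. Then $x$ approaches the facet $x_1+x_2=1$, so $d_P(x,y)\to\infty$ and $R(x,y)\to 0$. Meanwhile $s_1=s_2=-3\delta\to-1/2$, so your bound $n^n\prod_{i\in I'}\sqrt{1-s_i^2}$ tends to $3$. No argument that uses one index set for $x$, $y$ and $z$ can see a facet outside that set.

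The missing step is an exchange. Since $|s_i|\le t=|s_j|\le\sum_{i\in I'}|c_{j,i}||s_i|$, some $|c_{j,i_0}|\ge 1/n$. So (if $j\notin I'$) $J=(I'\setminus\{i_0\})\cup\{j\}\in\mathcal{B}$ satisfies $q_J(z)=q_{I'}(z)/|c_{j,i_0}|\le n\,q_{I'}(z)$. Bounding $\omega_P(x),\omega_P(y)$ with $q_J$ and $\omega_P(z)$ with $q_{I'}$ then gives $R(x,y)\le n^{n+1}\sqrt{1-t^2}=n^{n+1}(1+t)e^{-d_P(x,y)/2}\le 2n^{n+1}e^{-d_P(x,y)/2}$.

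Even repaired, this route gives the constant $2n^{n+1}$, not $2^n\sqrt n$. Every passage through Lemma \ref{omegaformula} pays the factor $n^n$ between its upper and lower bounds, and your plan for the constant reuses exactly that lower bound for $\omega_P(z)$. A dimensional constant is all Theorem \ref{Kpolyh} needs, but the statement asks for $2^n\sqrt n$. The paper obtains it by not using Lemma \ref{omegaformula} at all in this part:
\begin{enumerate}
\item With $K=M_P(z)-z$ and $e\in\{\pm(x-y)/2\}$ chosen so that $\ell_j(z+e)=(1-t)\ell_j(z)$, one has $M_P(z+e)-(z+e)\subset 2K\cap\{u:|\langle a_j,u\rangle|<(1-t)\ell_j(z)\}$.
\item Brunn's concavity principle, applied to the sections of the symmetric body $K$ by the hyperplanes $\langle a_j,u\rangle=\mathrm{const}$, bounds the volume of this slab by $n2^{n-1}(1-t)\,m(K)$.
\item Concavity of $\omega_P^{1/n}$ gives $\omega_P(z-e)\le 2^n\omega_P(z)$.
\item Multiplying yields $R\le 2^n\sqrt{n/2}\,\sqrt{1-t}\le 2^n\sqrt n\,e^{-d_P(x,y)/2}$.
\end{enumerate}
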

\begin{proof}
	We will only consider the cases $n\geq 2$, the case $n=1$ is immediate.
	For $I\in\mathcal{B}$, let 
	$$E_I=\{x\in P:q_I(x)=\min_{J\in\mathcal{B}}q_J(x)\}.$$ 
	By Lemma \ref{omegaformula} and using the change of variables $s_i=\log \ell_i(x)$, $i\in I$, which gives $$ds=\frac{dx}{q_I(x)},$$ we get
	\begin{eqnarray} 
		\mu(\{x\in P:d_{P}(x,y)<r\})&\leq& \sum_{I\in\mathcal{B}} \mu(\{x\in E_I:d_{P}(x,y)<r\}) \nonumber \\
		&=& \sum_{I\in\mathcal{B}} \int_{\{x\in E_I:d_{P}(x,y)<r\}} \frac{1}{\omega_P(x)}dx \nonumber \\
		&\leq& (n/2)^n\sum_{I\in\mathcal{B}} \int_{\{x\in E_I:d_{P}(x,y)<r\}} \frac{dx}{q_I(x)} \nonumber\\
		&\leq& (n/2)^n\sum_{I\in\mathcal{B}} \int_{\{|s_i-\log \ell_i(y)|<r\; :\; i\in I\}} ds \nonumber \\
		&\leq& (n/2)^n\sum_{I\in\mathcal{B}} (2r)^n \leq\binom{N}{n}n^nr^n. \nonumber
	\end{eqnarray} 
	Now let us fix $x,y\in P$, and set $$t=\max_{1\leq i\leq N}\frac{|\langle a_i,(y-x)/2\rangle|}{\ell_i((x+y)/2)}=\frac{|\langle a_j ,(y-x)/2\rangle|}{\ell_j((x+y)/2)}<1.$$ Let us also choose $e\in \{(x-y)/2,(y-x)/2\}$ so that
	$$\ell_j((x+y)/2+e)=\ell_j((x+y)/2)(1-t).$$
	In addition, 
	\begin{align*} M_P((x+y)/2+e)-((x+y)/2+e)&\subset  \\ 2 (M_P((x+y)/2)-(x+y)/2)\cap & \{u\in\mathbb{R}^n:|\langle a_j,u\rangle|<\ell_j((x+y)/2)(1-t)\}.
	\end{align*}
	Brunn's concavity principle (see \cite[Theorem~8.4]{Gruber2007}) says that for a convex body $K$ and $\theta\in S^{n-1}$, the function $$A(t)=m_{n-1}^{1/(n-1)}(K\cap\{x\in\mathbb{R}^{n}:\langle x,\theta\rangle=t\})$$
	is concave on $\{t\in\mathbb{R}:A(t)>0\}$. Therefore, for a linear functional $Lx=\langle u,x\rangle$ we derive that
	$$A_L(t)=m_{n-1}^{1/(n-1)}(K\cap \{x\in\mathbb{R}^{n}:Lx=t\})$$
	is concave on $\{t:A_L(t)>0\}$. If, in addition, $K$ is centrally symmetric and $\sup_{x\in K}|Lx|=1$, then
	$$m(K)=\frac{1}{|u|}\int_{-1}^{1}A_L^{n-1}(t)dt\geq \frac{2}{n|u|}A_L^{n-1}(0),$$
	and therefore
	\begin{eqnarray}
		m(2K\cap \{x\in\mathbb{R}^{n}:|Lx|<s\})
		&=&\frac{2^n}{|u|}\int_{-s/2}^{s/2}A_L^{n-1}(t)dt\nonumber \\
		&\leq& \frac{2^ns}{|u|}A_L^{n-1}(0)\nonumber \\
		&\leq& n2^{n-1}s m(K). \nonumber 
	\end{eqnarray}
	Applying this to the above estimate with
	$$K=M_P((x+y)/2)-(x+y)/2,\quad \text{and} \quad L(u)=\left\langle u,\frac{a_j}{\ell_j((x+y)/2)}\right\rangle,$$
	we have $\sup_{u\in K}|L(u)|=1$, and hence
	$$\omega_P((x+y)/2+e)\leq n2^{n-1}(1-t)\omega_P((x+y)/2).$$
	Concavity of the $\omega_P^{1/n}$ function also gives that $$\omega_P((x+y)/2-e)\leq 2^n \omega_P((x+y)/2).$$ The proof is complete by observing that $d_P(x,y)=\log\frac{1+t}{1-t}$.
\end{proof}
Using this we can derive the boundedness of the integral operator with kernel $$K_{\sigma,\tau}(x,y)=\frac{\omega_P^\sigma(x)\omega_P^\tau(y)}{\omega_P^{\sigma+\tau+1}((x+y)/2)}.$$

\begin{theo}\label{Kpolyh}
	Let $P\subset\mathbb{R}^n$ be a polyhedron that does not contain affine lines. The integral operator defined on $L^2(P)$ with kernel $K_{\sigma,\tau}$ is bounded whenever $\sigma,\tau>-1/2.$
\end{theo}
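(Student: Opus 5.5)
We will prove the theorem with a Schur test on $L^2(P)$, using the weight $\omega_P^{-1/2}$ together with the two estimates of Lemma \ref{metricestimates}. First we rewrite the kernel in terms of $R$ and $\mu$. Since
$$K_{\sigma,\tau}(x,y)=\frac{\omega_P^\sigma(x)\omega_P^\tau(y)}{\omega_P^{\sigma+\tau+1}((x+y)/2)},$$
we multiply and divide to get
$$K_{\sigma,\tau}(x,y)=R(x,y)^{\sigma+\tau+1}\,\omega_P(x)^{(\sigma-\tau-1)/2}\,\omega_P(y)^{(\tau-\sigma-1)/2}.$$
Hence
$$K_{\sigma,\tau}(x,y)\sqrt{\omega_P(x)}\sqrt{\omega_P(y)}=R(x,y)^{\sigma+\tau+1}\left(\frac{\omega_P(x)}{\omega_P(y)}\right)^{(\sigma-\tau)/2}.$$
Let $U:L^2(P,dx)\to L^2(P,d\mu)$ be the unitary $Uf=\sqrt{\omega_P}\,f$. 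Under $U$, boundedness of $K_{\sigma,\tau}$ on $L^2(dx)$ is equivalent to boundedness on $L^2(d\mu)$ of the operator with kernel
$$\widetilde K(x,y)=R(x,y)^{\sigma+\tau+1}\left(\frac{\omega_P(x)}{\omega_P(y)}\right)^{(\sigma-\tau)/2}.$$

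Next we control the ratio $\omega_P(x)/\omega_P(y)$ by the metric $d_P$. By Lemma \ref{omegaformula}, $\omega_P$ is comparable to $\min_{I\in\mathcal B}q_I$. Each $q_I$ is a product of $n$ of the $\ell_i$, so
$$e^{-n d_P(x,y)}\le \frac{q_I(x)}{q_I(y)}\le e^{n d_P(x,y)}.$$
Taking minima over $I\in\mathcal B$ preserves this, so
$$\frac{\omega_P(x)}{\omega_P(y)}\le n^n e^{n d_P(x,y)}.$$
Combining with the bound $R(x,y)\le 2^n\sqrt n\, e^{-d_P(x,y)/2}$, and noting that $\sigma+\tau+1>0$, we get
$$\widetilde K(x,y)\lesssim \exp\Big(-\big(\tfrac{\sigma+\tau+1}{2}-\tfrac{n|\sigma-\tau|}{2}\big)d_P(x,y)\Big).$$

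This crude bound is the main obstacle: for $|\sigma-\tau|$ large, the exponent $\frac{\sigma+\tau+1}{2}-\frac{n|\sigma-\tau|}{2}$ is not positive. To avoid it we will not estimate $R$ and the ratio separately. Instead we run the Schur test directly on the original kernel, with test function $h=\omega_P^{-\alpha}$ on $L^2(dx)$. The conditions to check are
$$\int_P K_{\sigma,\tau}(x,y)\,\omega_P(y)^{-\alpha}\,dy\lesssim \omega_P(x)^{-\alpha}$$
and the symmetric integral in $x$. We choose $\alpha$ strictly between $1/2$ and $\min(1/2+\sigma,1/2+\tau)+1/2$, i.e. close to $1/2$.

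To verify the integral condition, we substitute $z=(x+y)/2$ and decompose $P$ into the pieces $E_I$ from the proof of Lemma \ref{metricestimates}. On $E_I$ the change of variables $s_i=\log\ell_i$ makes $q_I$ an exponential of a linear form, so each piece of the integral becomes an explicit integral over a product of half-lines. It converges exactly when every power of each $\ell_i$ near the facets exceeds $-1$, and that is where the hypotheses $\sigma,\tau>-1/2$ enter. The integrals over the regions where $d_P(x,y)$ is large are controlled by the volume bound $\mu(\{d_P<r\})\lesssim r^n$: this polynomial growth is beaten by the exponential decay $e^{-c\,d_P}$ coming from $R$ as soon as the effective exponent $c$ is positive.

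If this fails for unbalanced $(\sigma,\tau)$, the fallback is to prove the estimate first in the symmetric case $\sigma=\tau>-1/2$. There the exponent is $\sigma+\frac12>0$, and the Schur test with $h=1$ in $L^2(d\mu)$, summed over dyadic shells of $d_P$, gives
$$\sum_{k\ge 0} e^{-(\sigma+1/2)k}\binom{N}{n}n^n(k+1)^n<\infty.$$
The general case would then follow by writing
$$K_{\sigma,\tau}=\Big(\frac{\omega_P(x)}{\omega_P(y)}\Big)^{(\sigma-\tau)/2}K_{\frac{\sigma+\tau}{2},\frac{\sigma+\tau}{2}}$$
and controlling the weight factor by a weighted Schur test with power weights $\omega_P^{\pm\beta}$, using the same change of variables.
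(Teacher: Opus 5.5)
Your unitary reduction to $L^2(d\mu)$, the kernel identity for $\widetilde K$, and your treatment of the symmetric case $\sigma=\tau$ (Schur test with $h=1$, shells in $d_P$, the volume bound) are correct and match the paper. The unbalanced case $\sigma\neq\tau$, however, is not proved.

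\textbf{The main route is only asserted.} On the pieces $E_I$, the factor $\omega_P(\frac{x+y}{2})$ does not become a product of powers of the $\ell_i$ in the integration variable. So the claim that the integral converges ``exactly when every power of each $\ell_i$ exceeds $-1$'' has no basis. The appeal to decay $e^{-c\,d_P}$ presupposes the positivity of $c$ that you had just shown you could not get.

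\textbf{The fallback is also incomplete.} The factor $(\omega_P(x)/\omega_P(y))^{(\sigma-\tau)/2}$ is unbounded. Boundedness of $K_{\frac{\sigma+\tau}{2},\frac{\sigma+\tau}{2}}$ only uses $\sigma+\tau>-1$, so the hypothesis $\min(\sigma,\tau)>-1/2$ has to enter through control of that factor. Already for $P=(0,\infty)$ the operator is unbounded once $\min(\sigma,\tau)\le -1/2$. Power weights do not help while the ratio is bounded only by $n^n e^{n d_P}$. With $h=\omega_P^{-\beta}$ on $L^2(d\mu)$, the two Schur conditions carry ratio exponents $\frac{\sigma-\tau}{2}\pm\beta$. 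Optimizing over $\beta$ returns exactly your bad exponent $\frac{\sigma+\tau+1}{2}-\frac{n|\sigma-\tau|}{2}$.

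\textbf{The missing idea} is concavity of $\omega_P^{1/n}$, which gives $\omega_P(x),\omega_P(y)\le 2^n\omega_P(\frac{x+y}{2})$. Equivalently, $\omega_P(x)/\omega_P(y)\le 4^nR(x,y)^{-2}$. This bounds the ratio by $R$ itself rather than by $e^{nd_P}$, removing the factor $n$. For $\sigma\ge\tau$,
\[
\widetilde K(x,y)=R(x,y)^{2\tau+1}\Big(\frac{\omega_P(x)}{\omega_P(\frac{x+y}{2})}\Big)^{\sigma-\tau}\le 2^{n(\sigma-\tau)}R(x,y)^{2\tau+1},
\]
and symmetrically for $\tau\ge\sigma$. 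Hence $\widetilde K\le 2^{n|\sigma-\tau|}R^{2\rho+1}$ with $\rho=\min(\sigma,\tau)$.

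Your symmetric-case argument then goes through verbatim with decay exponent $\rho+\frac12>0$. That is, use the Schur test with $h=1$ on $L^2(d\mu)$, the bound $R\le 2^n\sqrt n\,e^{-d_P/2}$, and $\mu(\{d_P<k+1\})\le\binom{N}{n}n^n(k+1)^n$. This is precisely the paper's proof, and it is exactly where $\min(\sigma,\tau)>-1/2$ is used. No change of variables beyond the one in Lemma \ref{metricestimates} and no weight other than $\omega_P^{-1/2}$ on $L^2(dx)$ is needed.
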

\begin{proof}
	By concavity of $\omega_P^{1/n}$, $\omega_P((x+y)/2)\geq 2^{-n}\max(\omega_P(x),\omega_P(y))$. For $\rho=\min(\sigma,\tau)$ we can write
	\begin{eqnarray} 
		\int_{P}K_{\sigma,\tau}(x,y)\left(\frac{\omega_P(x)}{\omega_P(y)}\right)^{-1/2}dx&=&\int_{P}K_{\sigma,\tau}(x,y)(\omega_P(x)\omega_P(y))^{1/2}d\mu(x) \nonumber \\
		&\leq& 2^{n|\sigma-\tau|}\int_{P}R^{2\rho+1}(x,y)d\mu(x).\nonumber
	\end{eqnarray}
	By Lemma \ref{metricestimates}, we get that
	\begin{align*} 
		\int_{P}K_{\sigma,\tau}(x,y)&\left(\frac{\omega_P(x)}{\omega_P(y)}\right)^{-1/2}dx\leq 2^{n|\sigma-\tau|}(2^n\sqrt{n})^{2\rho+1}\int_{P}e^{-d_P(x,y)(\rho+1/2)} d\mu(x)  \\
		&\leq 2^{n|\sigma-\tau|}(2^n\sqrt{n})^{2\rho+1} \sum_{k=0}^{\infty}e^{-k(\rho+1/2)}\mu(\{x\in P:d_P(x,y)<k+1\})  \\
		&\leq  2^{n|\sigma-\tau|}(2^n\sqrt{n})^{2\rho+1}\binom{N}{n}n^n\sum_{k=0}^{\infty}e^{-k(\rho+1/2)} (k+1)^n ,
	\end{align*} 
	which is uniformly bounded in $y$ since $\rho+1/2>0.$ The result follows by Schur's test.
\end{proof}

Therefore, by Proposition \ref{polyhconseq} and Theorem \ref{Kpolyh} we have the following corollary.

\begin{cor} Let $1\leq p<\infty$ and suppose that $$\sigma,\tau>\max(-\frac{1}{2},-\frac{1}{p}), \quad \sigma+\tau>-\frac{1}{p}.$$
	Then, for every polyhedron $P\subset \mathbb{R}^n$ that does not contain affine lines, it holds that
	$$\Ha_\varphi^{\sigma,\tau}\in S^p(\PW(P))
	\quad\Longleftrightarrow\quad
	\varphi\in B_{p,p}^{\sigma+\tau+1/p}(2P),$$
	with equivalence of norms.
\end{cor}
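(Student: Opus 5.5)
The corollary is a direct combination of two earlier results, so the plan is to verify the hypothesis of one using the other. Proposition \ref{polyhconseq} gives the full Schatten class characterization for a polyhedron $P$ with no affine lines, under a single assumption: the integral operator $K_{\sigma,\tau}$ is bounded on $L^2(P)$ for every $\sigma,\tau>-1/2$. Theorem \ref{Kpolyh} supplies exactly this boundedness for every such polyhedron.

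First, I fix a polyhedron $P$ with no affine lines, written as $\{\ell_j>0\}$. Theorem \ref{Kpolyh} then shows that $K_{\sigma,\tau}$ is bounded for all $\sigma,\tau>-1/2$. Its proof runs Schur's test with the test function $\omega_P^{-1/2}$, using the decay $R(x,y)\lesssim e^{-d_P(x,y)/2}$ and the polynomial growth of $\mu$-balls from Lemma \ref{metricestimates}. Second, I invoke Proposition \ref{polyhconseq}: for $1\le p<\infty$ and parameters with $\sigma,\tau>\max(-1/2,-1/p)$ and $\sigma+\tau>-1/p$, it gives $\Ha^{\sigma,\tau}_\varphi\in S^p(\PW(P))$ if and only if $\varphi\in B^{\sigma+\tau+1/p}_{p,p}(2P)$, with equivalent norms. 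That is precisely the statement to be proved.

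The only point needing care is that the parameter range in the corollary matches the one in Proposition \ref{polyhconseq}. The boundedness requirement there concerns all $\sigma,\tau>-1/2$, independently of $p$. So no further restriction arises, and the norm-equivalence constants depend only on $P$, $p$, $\sigma$ and $\tau$.

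The genuine analytic difficulty sits inside Proposition \ref{polyhconseq}, which the paper takes from the cited work. It consists of interpolating between the case $1\le p\le 2$ (obtained via the admissible decomposition) and the endpoint $p=\infty$. At that endpoint, boundedness of $\Ha^{\sigma,\tau}_\varphi$ for $\varphi$ in the appropriate Besov space of order $\sigma+\tau$ reduces to the boundedness of $K_{\sigma,\tau}$. Since that endpoint is exactly Theorem \ref{Kpolyh}, the corollary requires no new estimates.
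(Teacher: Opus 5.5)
Your proposal is correct and is exactly the paper's argument: the corollary follows by checking the hypothesis of Proposition~\ref{polyhconseq} (boundedness of $K_{\sigma,\tau}$ for all $\sigma,\tau>-1/2$) with Theorem~\ref{Kpolyh}, and no new estimates are needed. Your account of the Schur test with test function $\omega_P^{-1/2}$ and your remark that the $p$-independent boundedness hypothesis imposes no extra parameter restriction are also accurate.
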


\section{Weak type factorizations}\label{sec5}
In this section we are going to relate Nehari's consequence for Schatten class Hankel operators to a weak factorization of $\PW^1$. It is well known that Nehari's theorem is equivalent to the following statement:
\\ There exists $C>0$ such that for every $f\in \PW^1(2\Omega)$ we can find $f_j,g_j\in \PW(\Omega)$ with $f=\sum_{j} f_jg_j$ (the convergence is discussed at length below) and $$\sum_j \|f_j\|_{L^2}\|g_j\|_{L^2}\leq C\|f\|_{L^1}.$$ We are going to show that Nehari's theorem for the Hankel operators in $S^p$ is related to the weaker factorization
$$f=\sum_{j} f_jg_j,\quad \sum_j (\|f_j\|_{L^2}\|g_j\|_{L^2})^{p'}\leq C_p\|f\|_{L^1}^{p'}.$$

Let $\Omega$ be a convex set that does not contain affine lines. For $1\leq p\leq \infty$ we define the set $W_p(\Omega)$ consisting of all functions $f$ that can be written as $f=\sum_{j=1}^N f_j g_j$, where $N\in\mathbb{Z}_+$, $f_j,g_j,$ $j=1,...,N$ are orthogonal families in $\PW(\Omega)\setminus\{0\}$ and we endow it with the norm $$\|f\|_{W_p(\Omega)}=\inf  \left\| (\|f_j\|_{L^2}\|g_j\|_{L^2})_{1\leq j\leq N}\right\|_{\ell^p},$$ where the infimum is taken over all representations of $f$. First, we establish that $\|\cdot\|_{W_p(\Omega)}$ is a norm.

\begin{lm}\label{WpNorm}
	For $1\leq p\leq\infty$, $\|\cdot\|_{W_p(\Omega)}$ is a norm on $W_p(\Omega)$.
\end{lm}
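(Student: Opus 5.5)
The plan is to identify $\|\cdot\|_{W_p(\Omega)}$ with a quotient of the Schatten $S^p$ norm on finite-rank tensors; the triangle inequality and homogeneity then follow from the corresponding properties of $S^p$. Definiteness comes separately, from a duality pairing with Hankel operators.

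\emph{Step 1: tensor reformulation.} Consider the bilinear product map $(u,v)\mapsto uv$ from $\PW(\Omega)\times\PW(\Omega)$ into $L^1(\mathbb{R}^n)$; the estimate $\|uv\|_{L^1}\leq\|u\|_{L^2}\|v\|_{L^2}$ holds by Cauchy--Schwarz. This map induces a linear map $\mathcal{M}$ on the algebraic tensor product $\PW(\Omega)\odot\PW(\Omega)$. We identify that tensor product with the finite-rank operators $T=\sum_j u_j\otimes v_j$, $h\mapsto \sum_j\langle h,\bar v_j\rangle u_j$. The key observation concerns an orthogonal representation $f=\sum_{j=1}^N f_jg_j$. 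The associated operator $T=\sum_j f_j\otimes g_j$ has singular values exactly $\|f_j\|_{L^2}\|g_j\|_{L^2}$, since it equals $\sum_j s_j\, e_j\otimes e_j'$ with orthonormal families $e_j,e'_j$. Hence $\|T\|_{S^p}$ equals the $\ell^p$ norm of the representation.

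Conversely, let $T$ be any finite-rank tensor with $\mathcal{M}T=f$. Its singular value decomposition $T=\sum_k s_k\,u_k\otimes v_k$ has orthonormal $u_k,v_k$, which lie in $\PW(\Omega)$ because $\Image T$ and $\Image T^\ast$ are spans of the original factors. This gives an orthogonal representation $f=\sum_k (s_ku_k)v_k$ with the same $\ell^p$ cost; here $\mathcal{M}$ is well defined because it factors through the tensor product. Therefore
$$\|f\|_{W_p(\Omega)}=\inf\{\|T\|_{S^p}: T \text{ finite rank},\ \mathcal{M}T=f\}.$$
In particular $W_p(\Omega)=\mathcal{M}(\PW(\Omega)\odot\PW(\Omega))$ is a vector space. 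The zero function has the empty representation, so its norm is $0$.

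\emph{Step 2: seminorm properties.} Homogeneity is immediate, since $\mathcal{M}(\lambda T)=\lambda\mathcal{M}T$. For the triangle inequality, take $T_1,T_2$ with $\mathcal{M}T_i=f_i$ and costs within $\epsilon$ of the infima. Then $\mathcal{M}(T_1+T_2)=f_1+f_2$, and the triangle inequality for the Schatten norm, $\|T_1+T_2\|_{S^p}\leq\|T_1\|_{S^p}+\|T_2\|_{S^p}$ (valid for $1\leq p\leq\infty$), gives the claim. I expect this to be the main obstacle if one insists on working only with orthogonal families: concatenating two orthogonal representations destroys orthogonality. Passing through the SVD as in Step 1 is exactly what resolves this.

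\emph{Step 3: definiteness.} Let $\varphi$ satisfy $\widehat\varphi\in C_c^\infty(2\Omega)$. For an orthogonal representation $f=\sum_j f_jg_j$ we have $\langle f,\varphi\rangle=\sum_j\langle \Ha_{\bar\varphi}\,g_j,\bar f_j\rangle$, suitably conjugated, which is a trace pairing $\operatorname{tr}(\Ha T)$. By Hölder for Schatten classes, $|\langle f,\varphi\rangle|\leq\|\Ha_{\bar\varphi}\|_{S^{p'}}\|T\|_{S^p}$. Since $\varphi\in\mathcal{F}^{-1}C_c^\infty(2\Omega)$ lies in $B^{1}_{1,1}(2\Omega)$ (only finitely many pieces $\phi^i_j$ meet its support), the $p=1$ characterization quoted earlier gives $\Ha_{\bar\varphi}\in S^1\subset S^{p'}$. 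Thus $|\langle f,\varphi\rangle|\leq C_\varphi\|f\|_{W_p(\Omega)}$. If $\|f\|_{W_p(\Omega)}=0$, then $\widehat f$ vanishes as a distribution on $2\Omega$. Since $\supp\widehat f\subset \overline{2\Omega}$ and $f\in L^1$, $\widehat f$ is continuous, so $\widehat f\equiv 0$ and therefore $f=0$.
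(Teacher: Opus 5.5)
Your proposal is correct and follows essentially the same route as the paper: the $W_p$ cost of an orthogonal representation is the $S^p$ norm of the associated finite-rank operator, and the SVD of the sum restores orthogonality for the triangle inequality. Your universal-property map $\mathcal{M}$ plays the role of the paper's identity $\sum_j T(e_j)e_j=\sum_j f_jg_j$. Definiteness follows from Schatten--H\"older against $\Ha_\varphi$ for $\widehat{\varphi}\in C_c^\infty(2\Omega)$; your Besov argument that $\Ha_\varphi\in S^1\subset S^{p'}$ and your continuity argument on $\partial(2\Omega)$ fill in points the paper leaves implicit.

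One bookkeeping fix is needed: pointwise conjugation maps $\PW(\Omega)$ onto $\PW(-\Omega)$. So your operator $h\mapsto\sum_j\langle h,\bar v_j\rangle u_j$ must be regarded on $L^2(\mathbb{R}^n)$ rather than on $\PW(\Omega)$, where it vanishes identically if $\Omega\cap(-\Omega)=\emptyset$ (e.g.\ for the orthant). Likewise, in Step 3 the right identity is $\langle \varphi, f_jg_j\rangle=\langle \Ha_\varphi f_j^\ast,g_j\rangle$ with $f_j^\ast(x)=\overline{f_j(-x)}$, since $\Ha_{\bar\varphi}$ has symbol supported in $-2\Omega$ and may be zero on $\PW(\Omega)$.
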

\begin{proof}
	The property $\|\lambda f\|_{W_p(\Omega)}=|\lambda|\| f\|_{W_p(\Omega)}$ is evident. Let us take $f\in W_p(\Omega)$ with $\|f\|_{W_p(\Omega)}=0$. For a representation $f=\sum_{j=1}^N f_j g_j$, let us define the conjugate-linear operator $T:\PW(\Omega)\to \PW(\Omega)$ as
	$$Tx=\sum_{j=1}^N\langle f_j,x\rangle g_j.$$
	Since $$ \left\| (\|f_j\|_{L^2}\|g_j\|_{L^2})_{1\leq j\leq N}\right\|_{\ell^p}=\|T\|_{S^p},$$ we can find representations $f_j^k,g_j^k,$ $j=1,...,N_k$ and conjugate linear operators $T_k$ such that $$T_kx=\sum_{j=1}^{N_k}\langle f_j^k,x\rangle g_j^k,\quad \|T_k\|_{S^p}\to 0 \quad \text{ and } f=\sum_{j=1}^{N_k}f_j^k g_j^k.$$
	Then, for any Schwartz function $\phi$ with Fourier transform compactly supported in $2\Omega$, by H\"{o}lder's inequality,
	\begin{eqnarray} 
		|\langle \phi,f\rangle|&=&|\sum_{j=1}^{N_k}\langle \phi, f_j^k g_j^k\rangle|\leq\sum_{j=1}^{N_k}\|f_j^k\|_{L^2}\|g_j^k\|_{L^2}|\langle \Ha_\phi (\frac{f_j^k}{\|f_j^k\|_{L^2}})^\ast, \frac{g_j^k}{\|g_j^k\|_{L^2}}\rangle| \nonumber \\ &\leq& \|T_k\|_{S^p}\|\Ha_\phi\|_{S^{p'}}\to 0, \nonumber
	\end{eqnarray} 
	where $g^\ast$ denotes the function defined by the property $\widehat{g^\ast}=\overline{\hat{g}}.$
	Since $\supp\hat{f}\subset 2\Omega$, $f=0$. Now let us fix $f,g\in W_p(\Omega)$. For $\epsilon>0$ let $f_j,f'_j$, $j=1,...,N_1$ and $g_j,g_j'$, $j=1,...,N_2$ be representations of $f$ and $g$, respectively, such that
	$$\|f\|_{W_p(\Omega)}\geq \left\| (\|f_j\|_{L^2}\|f'_j\|_{L^2})_{1\leq j\leq N_1}\right\|_{\ell^p}-\epsilon,$$ and  $$\|g\|_{W_p(\Omega)}\geq \left\| (\|g_j\|_{L^2}\|g'_j\|_{L^2})_{1\leq j\leq N_2}\right\|_{\ell^p}-\epsilon.$$
Let us also define as before the conjugate linear operators
	$$Tx=\sum_{j=1}^{N_1}\langle f_j,x\rangle f_j',\quad Sx=\sum_{j=1}^{N_2}\langle g_j,x\rangle g_j'.$$ Then the operator $T+S$ is a finite rank conjugate linear operator, and thus we can find orthogonal families $h_j,h_j'\in \PW(\Omega)$, $j=1,...,N_3$ with 
	$$(T+S)x=\sum_{j=1}^{N_3}\langle h_j,x\rangle h_j'.$$ Now, if $e_1,e_2,...$ is an orthonormal basis for $\PW(\Omega)$, we can notice that
	\begin{eqnarray} h&=&\sum_{j=1}^{N_3}h_jh_j'=\sum_{j\geq 1}(T+S)(e_j)e_j=\sum_{j\geq 1}T(e_j)e_j+\sum_{j\geq 1}S(e_j)e_j \nonumber \\
		&=&\sum_{j=1}^{N_1}f_jf_j'+\sum_{j=1}^{N_2}g_jg_j'=f+g,\nonumber
	\end{eqnarray}
	and that
	$$\|h\|_{W_p(\Omega)}\leq \|T+S\|_{S^p}\leq \|T\|_{S^p}+\|S\|_{S^p}\leq \|f\|_{W_p(\Omega)}+\|g\|_{W_p(\Omega)}+2\epsilon.$$
	Taking $\epsilon\to 0$ we get the triangle inequality.
\end{proof}
For a distribution $f$, we say that $f\in (W_p(\Omega))'$ if for any $\phi\in W_p(\Omega)$, 
$$|\langle f,\phi\rangle|\leq C\|\phi\|_{W_p(\Omega)},$$ and we write
$$\|f\|_{(W_p(\Omega))'}=\sup_{\phi\in W_p(\Omega)}\dfrac{|\langle f,\phi\rangle|}{\|\phi\|_{W_p(\Omega)}}.$$ It is evident that this space coincides with the Banach dual of $W_p(\Omega)$; indeed, every $T$ in the dual space induces the bounded Hankel form

$$
(f,g)\longmapsto T(fg),
$$

and hence, by the Schwartz kernel theorem, there exists a distribution $\phi_T$ such that

$$
T(fg)=\langle \phi_T,fg\rangle,\qquad f,g\in\PW(\Omega).
$$

It turns out that this space is isometrically isomorphic to $S^{p'}$ Hankel operators.
\begin{lm}\label{dualSp}
	For $1\leq p\leq \infty$ it holds that
	$$\|\phi\|_{(W_{p}(\Omega))'}=\|\Ha_\phi\|_{S^{p'}}.$$
\end{lm}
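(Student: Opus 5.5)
We prove the two inequalities separately, using throughout the correspondence already set up in the proof of Lemma \ref{WpNorm}. A finite representation $\varphi=\sum_{j=1}^N f_jg_j$ with orthogonal families $f_j,g_j\in\PW(\Omega)$ corresponds to the finite rank conjugate-linear operator $Tx=\sum_j\langle f_j,x\rangle g_j$. Since the families are orthogonal, the singular values of $T$ are exactly $\|f_j\|_{L^2}\|g_j\|_{L^2}$, so $\|T\|_{S^p}=\|(\|f_j\|_{L^2}\|g_j\|_{L^2})_j\|_{\ell^p}$. Moreover,
$$\langle\phi,\varphi\rangle=\sum_j\langle \phi,f_jg_j\rangle=\sum_j\langle \Ha_\phi f_j^\ast,g_j\rangle,$$
up to the conjugation convention. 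Fixing an orthonormal basis $(e_k)$ of $\PW(\Omega)$, this equals a trace pairing $\operatorname{tr}(\Ha_\phi\circ J T)$, where $J$ is the conjugation $g\mapsto g^\ast$, which preserves $\PW(\Omega)$ only up to reflecting $\Omega$. I would handle this by writing the pairing as $\sum_k\langle \Ha_\phi e_k^\ast, T e_k\rangle$, exactly as in the identity $\sum_k (T+S)(e_k)e_k$ of Lemma \ref{WpNorm}.

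For the upper bound $\|\phi\|_{(W_p(\Omega))'}\le\|\Ha_\phi\|_{S^{p'}}$, the H\"older estimate in the proof of Lemma \ref{WpNorm} already does the work:
$$|\langle\phi,\varphi\rangle|\le\sum_j\|f_j\|\|g_j\|\,\Big|\Big\langle\Ha_\phi\frac{f_j^\ast}{\|f_j\|},\frac{g_j}{\|g_j\|}\Big\rangle\Big|.$$
Because $\{f_j^\ast/\|f_j\|\}$ and $\{g_j/\|g_j\|\}$ are orthonormal systems, the sequence of matrix coefficients has $\ell^{p'}$ norm at most $\|\Ha_\phi\|_{S^{p'}}$; this is the standard fact that diagonal entries of an operator in orthonormal systems are dominated in $\ell^{p'}$ by its $S^{p'}$ norm. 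Applying H\"older in $\ell^p\times\ell^{p'}$ and taking the infimum over representations gives the bound. This also covers $p=1$ and $p=\infty$.

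For the lower bound, assume $\|\phi\|_{(W_p)'}=:A<\infty$. Each finite rank conjugate-linear $T$ with singular value decomposition $Tx=\sum_j s_j\langle u_j,x\rangle v_j$ produces the element $\varphi_T=\sum_j s_ju_jv_j\in W_p(\Omega)$, with $\|\varphi_T\|_{W_p}\le\|T\|_{S^p}$ and $\langle\phi,\varphi_T\rangle=\operatorname{tr}(\Ha_\phi\circ T')$, where $T'$ is the corresponding linear operator. Hence the functional $T\mapsto\operatorname{tr}(\Ha_\phi T')$ is bounded by $A\|T\|_{S^p}$ on finite rank operators. By the duality $(S^p)'=S^{p'}$ for $1\le p<\infty$, which is the trace duality with compacts when $p=\infty$, compressions $P_N\Ha_\phi P_N$ onto spans of basis vectors $e_1,\dots,e_N$ chosen from $\mathcal F^{-1}C_c^\infty(\Omega)$ satisfy $\|P_N\Ha_\phi P_N\|_{S^{p'}}\le A$ uniformly in $N$. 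Letting $N\to\infty$ and using lower semicontinuity of the Schatten norm under weak operator convergence gives $\|\Ha_\phi\|_{S^{p'}}\le A$.

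The main obstacle is bookkeeping rather than analysis. One must make $\Ha_\phi$ a priori well defined, at least as a form on the dense class $\mathcal F^{-1}C_c^\infty(\Omega)$, and check that the conjugation $f\mapsto f^\ast$ intertwines the bilinear Hankel form with the sesquilinear trace pairing without changing the singular values. I would handle this by working entirely with the bilinear form $(f,g)\mapsto\langle\phi,fg\rangle$ and the isometric antiunitary $J$.
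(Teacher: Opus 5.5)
Your proof is correct. The upper bound is the paper's argument verbatim: H\"older in $\ell^p\times\ell^{p'}$ against the diagonal coefficients $\langle \Ha_\phi f_j^\ast/\|f_j\|, g_j/\|g_j\|\rangle$, then the infimum over representations.

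For the lower bound you take a different, more operator-theoretic route. The paper tests $\phi$ only on the special elements $\sum_j c_j f_j^\ast g_j$, built from one fixed pair of finite orthonormal families with $\|(c_j)\|_{\ell^p}=1$. Scalar $\ell^p$--$\ell^{p'}$ duality then gives $\|(\langle\Ha_\phi f_j,g_j\rangle)_j\|_{\ell^{p'}}\le\|\phi\|_{(W_p(\Omega))'}$. The variational formula $\|\Ha_\phi\|_{S^{p'}}=\sup\|(\langle\Ha_\phi f_j,g_j\rangle)_j\|_{\ell^{p'}}$, taken over finite orthonormal families, finishes the proof with no limiting step.

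You instead test on all finite-rank operators through their singular value decompositions, apply trace duality $S^p$--$S^{p'}$ on finite-dimensional compressions, and pass to the limit by lower semicontinuity. This buys an explicit identification of $W_p(\Omega)$ with finite-rank $S^p$ operators via $T\mapsto\sum_k T(e_k)e_k$, which is the viewpoint of Lemma \ref{WpNorm} and Proposition \ref{completion}. The cost is the extra limiting step.

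In that step you should say why the weak limit of $P_N\Ha_\phi P_N$ is $\Ha_\phi$ itself. This needs the form $(x,y)\mapsto\langle\Ha_\phi x,y\rangle$ to be $L^2$-bounded on all of $\mathcal{F}^{-1}C_c^\infty(\Omega)$, not just on the span of your chosen basis. That follows at once from the rank-one case of your hypothesis: $|\langle \Ha_\phi x,y\rangle|=|\langle\phi,x^\ast y\rangle|\le A\|x\|_{L^2}\|y\|_{L^2}$. So the form extends to a bounded operator $B$, and $P_N B P_N\to B$ strongly.

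Finally, your concern that $g\mapsto g^\ast$ reflects $\Omega$ is unfounded. With the paper's convention $\widehat{g^\ast}=\overline{\hat g}$ the support is unchanged, so $J$ is an antiunitary map of $\PW(\Omega)$ onto itself, and of $\mathcal{F}^{-1}C_c^\infty(\Omega)$ onto itself. That is exactly what your argument uses.
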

\begin{proof}
	First, let us observe that for $f\in W_{p}(\Omega)$, with a representation $f=\sum_{j=1}^N f_jg_j$,
	\begin{eqnarray}
		|\langle \phi, f\rangle|&\leq& \sum_{j=1}^N|\langle \phi, f_j g_j\rangle|=\sum_{j=1}^N\|f_j\|_{L^2}\|g_j\|_{L^2}|\langle \Ha_\phi \frac{f_j^{\ast}}{\|f_j\|_{L^2}},  \frac{g_j}{\|g_j\|_{L^2}}\rangle| \nonumber \\
		&\leq& \left\| (\|f_j\|_{L^2}\|g_j\|_{L^2})_{1\leq j\leq N}\right\|_{\ell^{p}}\|\Ha_\phi\|_{S^{p'}}.\nonumber
	\end{eqnarray}
	Taking the infimum over all representations of $f$ we get
	$$|\langle \phi, f\rangle|\leq  \|f\|_{W_{p}(\Omega)}\|\Ha_\phi\|_{S^{p'}},$$  and thus
	$$\|\phi\|_{(W_{p}(\Omega))'}\leq \|\Ha_\phi\|_{S^{p'}}.$$
	For the converse inequality, we observe that for any $f_j,g_j$, $j=1,...,N$, orthonormal families,
	\begin{align*}
		\| (\langle \Ha_\phi f_j,g_j\rangle)_{1\leq j\leq N}\|&_{\ell^{p'}}
		= \sup_{\|c_j\|_{\ell^{p}}= 1}|\sum_jc_j\langle \Ha_\phi f_j,g_j\rangle| =   \sup_{\|c_j\|_{\ell^{p}}= 1}|\langle \phi, \sum_jc_j f_j^\ast g_j\rangle| \nonumber \\
		&\leq \|\phi\|_{(W_{p}(\Omega))'} \sup_{\|c_j\|_{\ell^{p}}= 1}\|\sum_jc_j f_j^\ast g_j\|_{W_{p}(\Omega)} \leq   \|\phi\|_{(W_{p}(\Omega))'} .\nonumber
	\end{align*}
	Since
	$$\|\Ha_\phi\|_{S^{p'}}=\sup_{f_j,g_j\text{ finite orthonormal families}}	\|(\langle \Ha_\phi f_j,g_j\rangle)_{j}\|_{\ell^{p'}},$$
	taking the supremum over all such $f_j,g_j$ families finishes the proof.
\end{proof}
In order to relate Nehari's theorem to a factorization of $\PW^1(\Omega)$ we need the following lemma.

\begin{lm}\label{sumsdense}
	The set
	$$\left\{ \sum_{j=1}^{N}f_jg_j: N\in\mathbb{Z}_+, j=1,...,N\text{ and } f_j,g_j\in \PW(\Omega) \text{ orthogonal families} \right\}$$
	is dense in $\PW^1(2\Omega)$.
\end{lm}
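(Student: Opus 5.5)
Since $\mathcal{F}^{-1}C_c^\infty(2\Omega)$ is dense in $\PW^1(2\Omega)$ by \cite[Lemma 2.6]{bampouras2026boundaryweightedfourierinequalitiesconvex}, it suffices to approximate a fixed $f$ with $\hat f\in C_c^\infty(2\Omega)$ in $L^1$ by such finite sums. First, every product $fg$ with $f,g\in\PW(\Omega)$ lies in $\PW^1(2\Omega)$, because $\widehat{fg}=\hat f\ast\hat g$ is supported in $\overline{\Omega+\Omega}=\overline{2\Omega}$ and $\|fg\|_{L^1}\le\|f\|_{L^2}\|g\|_{L^2}$. Second, the orthogonality requirement is harmless. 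Any finite sum $\sum_{j=1}^N u_jv_j$ with arbitrary $u_j,v_j\in\PW(\Omega)$ can be rewritten with orthogonal families by the argument of Lemma \ref{WpNorm}: the finite rank conjugate-linear operator $Tx=\sum_j\langle u_j,x\rangle v_j$ has a Schmidt decomposition $Tx=\sum_k\langle h_k,x\rangle h_k'$ with orthogonal $h_k,h_k'$. The trace identity $\sum_j u_jv_j=\sum_{i}T(e_i)e_i$ then shows $\sum_k h_kh_k'=\sum_j u_jv_j$, where $e_i$ is an orthonormal basis (the pairing being understood pointwise or in the distributional sense used there). So it suffices to show that the span of products $uv$, with $u,v\in\PW(\Omega)$, is dense.

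To approximate $f$, I will cover the compact set $K=\supp\hat f\subset 2\Omega$ by small sets of the form $B(z,\delta)$ with $z\in 2\Omega$. For each $z$, write $z=x+x$ with $x=z/2\in\Omega$, and choose $\delta$ small enough that $B(x,2\delta)\subset\Omega$. Take a smooth partition of unity $\hat f=\sum_{k=1}^M\widehat{f_k}$ with $\supp\widehat{f_k}\subset B(z_k,\delta)$, so that $f=\sum f_k$. It remains to approximate each $f_k$, whose spectrum lies in $B(2x_k,\delta)$, by sums of products $uv$ with $\hat u,\hat v$ supported in $B(x_k,\delta)\subset\Omega$.

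This local step is the heart of the argument, and I would do it by modulation. Set $v(t)=e^{2\pi i\langle x_k,t\rangle}\psi(t)$, where $\widehat\psi\in C_c^\infty(B(0,\delta/2))$ and $\psi(t)\approx 1$ on a large ball; for instance $\psi(t)=\eta(\epsilon t)$ with $\hat\eta$ supported in $B(0,\delta/2)$ and $\eta(0)=1$. Set $u=f_k e^{-2\pi i\langle x_k,\cdot\rangle}$, whose spectrum lies in $B(x_k,\delta)$. Then $uv=f_k\,\eta(\epsilon\cdot)$, and $\|f_k-f_k\eta(\epsilon\cdot)\|_{L^1}\to0$ as $\epsilon\to0$ by dominated convergence, since $f_k\in L^1$ and $\eta$ is bounded. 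The spectrum of $uv$ lies in $B(2x_k,3\delta/2)\subset 2\Omega$, and both $u,v\in L^2$ with Fourier support in $\Omega$ (we need $B(x_k,\delta)\subset\Omega$, which is ensured by the choice of $\delta$). Summing over $k$ gives an approximation of $f$ in $L^1$ by finite sums of products, and the previous paragraph converts these into orthogonal families.

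The main obstacle I expect is bookkeeping rather than any deep issue. I need to check that the Schmidt-rewriting identity $\sum_j u_jv_j=\sum_k h_kh_k'$ holds genuinely as functions; this can be verified by testing against $\phi$ with $\hat\phi\in C_c^\infty(2\Omega)$, as in Lemma \ref{WpNorm}. I also need to choose $\delta$ uniformly over $K$ via compactness, which is automatic since $K/2$ is compact in the open set $\Omega$.
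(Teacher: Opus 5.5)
Your proof is correct, but the density step takes a different route from the paper's; the orthogonalization step is essentially the same.

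The paper does not prove density of products by hand. It imports from Carlsson--Perfekt (proof of their Proposition 5.1) an exact representation $f=\sum_{j=1}^\infty f_jg_j$ in $L^1$, with $\sum_j\|f_j\|_{L^2}\|g_j\|_{L^2}<\infty$, for every $f\in\mathcal{F}^{-1}C_c^\infty(2\Omega)$. It then orthogonalizes via the Schmidt decomposition of the associated trace-class conjugate-linear operator, and the partial sums of the resulting orthogonal series are the required finite sums.

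You instead prove density of the span of products directly. A partition of unity localizes $\hat f$ to balls $B(z_k,\delta)$ with $B(z_k/2,2\delta)\subset\Omega$. The modulation identity
$\bigl(f_ke^{-2\pi i\langle x_k,\cdot\rangle}\bigr)\bigl(e^{2\pi i\langle x_k,\cdot\rangle}\eta(\epsilon\,\cdot)\bigr)=f_k\,\eta(\epsilon\,\cdot)$
then gives $L^1$-approximation of each piece by a single product. Because of this, you only orthogonalize finite-rank operators, and no trace-class or infinite-series bookkeeping arises. Two remarks on your stated worries:
\begin{itemize}
\item The identity $\sum_j u_jv_j=\sum_k h_kh_k'$ is a genuine $L^1$ identity, with no need to test against $\phi$. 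Indeed $\sum_{i\le M}\langle u_j,e_i\rangle e_i\to u_j$ in $L^2$, and multiplication by $v_j$ is continuous from $L^2$ to $L^1$.
\item The Schmidt decomposition of a conjugate-linear $T$ follows from that of the linear map $x\mapsto T(x^\ast)$. This works because $x\mapsto x^\ast$ (with $\widehat{x^\ast}=\overline{\hat x}$) is an anti-unitary involution of $\PW(\Omega)$.
\end{itemize}

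What your argument does not give is norm control. Your approximants satisfy $\|u\|_{L^2}\|v\|_{L^2}\sim\epsilon^{-n/2}\|f_k\|_{L^2}\|\eta\|_{L^2}\to\infty$. The cited factorization, by contrast, represents each $f\in\mathcal{F}^{-1}C_c^\infty(2\Omega)$ exactly with summable $\|f_j\|_{L^2}\|g_j\|_{L^2}$. For the lemma as stated, and for how it is invoked in Proposition \ref{factor}, only $L^1$-density matters. So your self-contained argument suffices and avoids the external citation.
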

\begin{proof}
	By \cite[Lemma 2.6]{bampouras2026boundaryweightedfourierinequalitiesconvex}, it suffices to show that these functions are dense in the set of smooth functions with compactly supported Fourier transform in $2\Omega$. Carlsson and Perfekt \cite[proof of Proposition 5.1]{MR4227573} showed that for any $f\in \mathcal{F}^{-1}C_c^\infty(2\Omega)$ we can find $f_j,g_j\in \PW(\Omega)$ such that
	$$f=\sum_{j=1}^{\infty} f_jg_j\text{ in $L^1$},\quad \sum_j \|f_j\|_{L^2}\|g_j\|_{L^2}<\infty.$$
	It suffices to show that this sum can be replaced by orthogonal sequences. Let us define the conjugate linear operator $T:\PW(\Omega)\to \PW(\Omega)$ by
	$$Tx=\sum_{j=1}^\infty\langle f_j,x\rangle g_j.$$ Since 
	$$\|T\|_{S^1}\leq \sum_j \|f_j\|_{L^2}\|g_j\|_{L^2}<\infty,$$
	this operator belongs to the trace class; thus, we can find orthogonal sequences $f_j',g_j'$, such that
	$$Tx=\sum_{j=1}^{\infty} \langle f'_j,x\rangle g'_j.$$ Now, if $e_1,e_2,...$ is an orthonormal basis of $\PW(\Omega)$, it can be computed that
	$$\sum_{j=1}^\infty T(e_j)e_j=\sum_{j=1}^\infty f_jg_j =\sum_{j=1}^\infty f'_jg'_j.$$
	The proof is complete.
\end{proof}

We will also need the following characterization of the completion of $W_p(\Omega)$.

\begin{prop}\label{completion}
	Let $\Omega$ be a convex set that does not contain affine lines and let $\mathcal{W}_p(\Omega)$, $1\leq p\leq \infty$, be the completion of $W_p(\Omega)$. Then, if $p<\infty$, $f\in \mathcal{W}_p(\Omega)$ if and only if there are orthogonal sequences $f_j,g_j\in \PW(\Omega)$ such that
	$$f=\lim_{N\to \infty} \sum_{j=1}^N f_j g_j \text{ in $\mathcal{W}_p(\Omega)$,} \quad \| (\|f_j\|_{L^2}\|g_j\|_{L^2})_{j}\|_{\ell^p}<\infty.$$
	Furthermore
	$$\|f\|_{\mathcal{W}_p(\Omega)}=\inf \|(\|f_j\|_{L^2}\|g_j\|_{L^2})_j\|_{\ell^p},$$ where the infimum is taken over all representations of $f$.
	For $p=\infty$ the proposition holds if we replace the condition $\| (\|f_j\|_{L^2}\|g_j\|_{L^2})_{j}\|_{\ell^\infty}<\infty$ by $\|f_j\|_{L^2}\|g_j\|_{L^2}\to 0$.
\end{prop}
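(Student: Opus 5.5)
We argue entirely inside the Banach space $\mathcal{W}_p(\Omega)$, using two facts from Lemma \ref{WpNorm}. First, $W_p(\Omega)$ is normed. Second, a finite sum $\sum_j f_jg_j$ with arbitrary (not necessarily orthogonal) factors can be re-expressed through the finite rank conjugate linear operator $Tx=\sum_j\langle f_j,x\rangle g_j$. Taking its Schmidt decomposition gives orthogonal factors with the same product sum $\sum_k T(e_k)e_k$ and $\ell^p$-cost $\|T\|_{S^p}$. In particular, every finite sum of elements of $W_p(\Omega)$ lies in $W_p(\Omega)$, with norm at most the $S^p$ norm of the summed operator.

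\emph{Sufficiency.} Suppose $f_j,g_j$ are orthogonal sequences with $\|(\|f_j\|\|g_j\|)_j\|_{\ell^p}<\infty$ (for $p=\infty$, with $\|f_j\|\|g_j\|\to 0$). Put $F_N=\sum_{j\le N}f_jg_j$. For $M>N$, the block $\{f_j,g_j\}_{N<j\le M}$ is itself an orthogonal representation of $F_M-F_N$, so
$$\|F_M-F_N\|_{W_p}\le \|(\|f_j\|\|g_j\|)_{N<j\le M}\|_{\ell^p}.$$
This tends to $0$ because the tail of an $\ell^p$ sequence vanishes for $p<\infty$ (for $p=\infty$ it is the $\sup$ of a null sequence). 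Hence $(F_N)$ is Cauchy and converges to some $f\in\mathcal{W}_p(\Omega)$. Moreover $\|F_N\|_{W_p}\le\|(\|f_j\|\|g_j\|)_j\|_{\ell^p}$, so $\|f\|\le\inf$ over representations.

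\emph{Necessity and the reverse norm inequality.} Let $f\in\mathcal{W}_p(\Omega)$ and $\epsilon>0$. Choose $\phi_k\in W_p(\Omega)$ with $\phi_k\to f$ and $\|\phi_1\|\le\|f\|+\epsilon$, $\|\phi_{k+1}-\phi_k\|\le\epsilon 2^{-k}$. Pick finite orthogonal representations of $\phi_1$ and of each difference $\phi_{k+1}-\phi_k$, with costs within $\epsilon2^{-k}$ of their norms, and call the associated finite rank operators $T_1,T_2,\dots$. Then $\sum_k\|T_k\|_{S^p}\le\|f\|+3\epsilon$, so $T=\sum_kT_k$ converges in $S^p$ with $\|T\|_{S^p}\le \|f\|+3\epsilon$.

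Take the Schmidt decomposition $Tx=\sum_j s_j\langle u_j,x\rangle v_j$, and set $f_j=\sqrt{s_j}\,u_j$, $g_j=\sqrt{s_j}\,v_j$. These are orthogonal sequences with $\|(\|f_j\|\|g_j\|)\|_{\ell^p}=\|T\|_{S^p}$; for $p=\infty$, note $T$ is compact as a norm limit of finite rank operators, so $s_j\to0$. It remains to show $\sum_{j\le N}f_jg_j\to f$ in $\mathcal{W}_p$.

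\emph{The key tool for this.} Define the product map $\Pi(S)=\sum_k S(e_k)e_k$ on finite rank $S$, with orthonormal basis $e_k$ of $\PW(\Omega)$. By the Schmidt observation above, $\|\Pi(S)\|_{W_p}\le\|S\|_{S^p}$. Since finite rank operators are dense in $S^p$ for $p<\infty$, and in the compacts for $p=\infty$, the map $\Pi$ extends to a contraction $\Pi:S^p\to\mathcal{W}_p(\Omega)$, with $S^\infty$ read as the compact operators. By construction,
$$\Pi\Big(\sum_{k\le K}T_k\Big)=\phi_{K+1}\to f, \qquad \Pi(T^{(N)})=\sum_{j\le N}f_jg_j,$$
where $T^{(N)}$ is the $N$-term truncation of the Schmidt series. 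Since both $\sum_{k\le K}T_k\to T$ and $T^{(N)}\to T$ in $S^p$, continuity of $\Pi$ gives $\sum_{j\le N}f_jg_j\to\Pi(T)=f$. Letting $\epsilon\to0$ yields the norm identity.

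\emph{Expected main obstacle.} The delicate point is justifying that $\Pi$ is well defined on finite rank operators, independently of the basis and of how $S$ is written. It must also be checked that $\Pi$ genuinely bounds $W_p$-norms, even though $W_p$ is defined using orthogonal families only. Both follow from the computation $\sum_kT(e_k)e_k=\sum_j f_jg_j$ already used in Lemma \ref{WpNorm}: this is a finite identity of functions, with convergence of $\sum_k\langle f_j,e_k\rangle e_k=f_j^{\ast}$-type expansions in $L^2$, so the products converge in $L^1$.
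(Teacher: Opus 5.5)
Your proof is correct and takes the same route as the paper. Both arguments identify finite sums $\sum_j f_jg_j$ with finite rank conjugate linear operators via $T\mapsto\sum_k T(e_k)e_k$, pass to $S^p$ (to the compact operators when $p=\infty$), and read off an orthogonal representation from the Schmidt decomposition. Your continuous extension of $\Pi$, together with the telescoping approximation $\phi_1+\sum_k(\phi_{k+1}-\phi_k)$, makes rigorous the step the paper simply calls ``evident''.
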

\begin{proof}
	We will only treat the case $1\leq p<\infty.$ The case $p=\infty$ follows similarly, replacing $S^\infty$ by compact operators. For $f\in W_p(\Omega)$, let us define for a representation of $f=\sum_j f_jg_j$ the conjugate linear operator $T:\PW(\Omega)\to \PW(\Omega)$ as
	$$Tx=\sum_j \langle f_j,x\rangle g_j.$$
Let also $e_1,e_2,...$ be an orthonormal basis for $\PW(\Omega)$. Then, we can notice that
	$$\|f\|_{W_p(\Omega)}=\inf\{\|T\|_{S^p}: T\text{ is finite rank conjugate linear with }\sum_j T(e_j)e_j=f\}.$$
	It is therefore evident that
	$$\|f\|_{\mathcal{W}_p(\Omega)}=\inf\{\|T\|_{S^p}: T\in S^p\text{ conjugate linear with }\sum_j T(e_j)e_j=f \text{ in }\mathcal{W}_p(\Omega)\}.$$
	Since any such operator $T\in S^p$ can be written as 
	$$Tx=\sum_j \langle f_j,x\rangle g_j,$$ for $f_j,g_j\in \PW(\Omega)\setminus\{0\}$ orthogonal sequences, we have that
	$$\|T\|_{S^p}=\left(\sum_j (\|f_j\|_{L^2}\|g_j\|_{L^2})^p\right)^{1/p}.$$
	The proof is complete.
\end{proof}
Now we can prove that Nehari's theorem is equivalent to a factorization of $\PW^1(2\Omega)$.
\begin{prop}\label{factor}
	Let $1\leq p\leq \infty$ and let $\Omega$ be a convex set that does not contain affine lines. Then the following are equivalent.
	\begin{enumerate}
		\item Nehari's theorem holds for the Schatten class $S^p$.
		\item There exists $C>0$ such that, for all $f\in \PW^1(2\Omega)$, the inequality
		$$\|f\|_{\mathcal{W}_{p'}(\Omega)}\leq C\|f\|_{L^1},$$ holds.
	\end{enumerate}
\end{prop}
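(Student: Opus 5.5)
The proof is a duality argument. Lemma \ref{dualSp} identifies $(\mathcal{W}_{p'}(\Omega))'$ isometrically with symbols $\phi$ for which $\Ha_\phi\in S^{p}$, and Lemma \ref{sumsdense} makes $W_{p'}(\Omega)$ a dense subspace of $\PW^1(2\Omega)$. We read ``Nehari's theorem holds for $S^p$'' quantitatively: there is $C>0$ such that every $\Ha_\phi\in S^p(\PW(\Omega))$ equals $\Ha_\psi$ for some $\psi\in L^\infty$ with $\|\psi\|_{L^\infty}\le C\|\Ha_\phi\|_{S^p}$. The qualitative version upgrades to this by the open mapping theorem, applied to the map $L^\infty\to\{\phi:\Ha_\phi\in S^p\}$, $\psi\mapsto \widehat\psi|_{2\Omega}$, whose target is a Banach space by Lemma \ref{dualSp}.

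\emph{(1)$\Rightarrow$(2).} Take $f\in W_{p'}(\Omega)$. By Hahn--Banach there is a functional $\Lambda$ on $\mathcal{W}_{p'}(\Omega)$ of norm one with $\Lambda(f)=\|f\|_{\mathcal{W}_{p'}}$. By the discussion preceding Lemma \ref{dualSp}, $\Lambda=\langle\phi,\cdot\rangle$ for a distribution $\phi$, and Lemma \ref{dualSp} gives $\|\Ha_\phi\|_{S^p}=1$. By (1) we can pick a bounded $\psi$ with $\widehat\psi=\widehat\phi$ on $2\Omega$ and $\|\psi\|_\infty\le C$. Then
$$\|f\|_{\mathcal{W}_{p'}}=\langle\phi,f\rangle=\langle\psi,f\rangle\le C\|f\|_{L^1}.$$
For general $f\in\PW^1(2\Omega)$, Lemma \ref{sumsdense} provides finite orthogonal sums $f_k\to f$ in $L^1$. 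The inequality just proved shows that $(f_k)$ is Cauchy in $\mathcal{W}_{p'}$, so it converges there. We must check that the $\mathcal{W}_{p'}$-limit agrees with $f$. Pairing against test functions $\phi$ with $\widehat\phi\in C_c^\infty(2\Omega)$ works: such $\phi$ give bounded $\Ha_\phi$ in every Schatten class, so both convergences imply convergence of $\langle\phi,f_k\rangle$, and the two limits coincide. This identification is the step I expect to be the main technical obstacle. It amounts to showing that the natural map $\mathcal{W}_{p'}(\Omega)\to\mathcal{S}'$ is injective, which follows by the argument of Lemma \ref{WpNorm} using such test symbols.

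\emph{(2)$\Rightarrow$(1).} Let $\Ha_\phi\in S^p$. For $f$ in the dense set of Lemma \ref{sumsdense}, Lemma \ref{dualSp} and (2) give
$$|\langle\phi,f\rangle|\le\|\Ha_\phi\|_{S^p}\|f\|_{\mathcal{W}_{p'}}\le C\|\Ha_\phi\|_{S^p}\|f\|_{L^1}.$$
So $f\mapsto\langle\phi,f\rangle$ extends to a bounded functional on $\PW^1(2\Omega)$. By Hahn--Banach and $(L^1)'=L^\infty$ it is represented by some $\psi\in L^\infty$ with $\|\psi\|_\infty\le C\|\Ha_\phi\|_{S^p}$. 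Testing against $\mathcal{F}^{-1}C_c^\infty(2\Omega)$ shows $\widehat\psi=\widehat\phi$ on $2\Omega$, so $\Ha_\phi=\Ha_\psi$, exactly as in Theorem \ref{hilbertnehari}.

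For $p=\infty$ (where $p'=1$) and $p=1$ the same argument applies. In the case $p'=\infty$ one uses the compact-operator version of Proposition \ref{completion}, and the duality in Lemma \ref{dualSp} is then with the trace class, which is consistent.
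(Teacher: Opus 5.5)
Your argument is the paper's proof. Lemma~\ref{dualSp} identifies the dual of $W_{p'}(\Omega)$ with the symbols of $S^p$ Hankel operators, so (1) gives $\|f\|_{W_{p'}(\Omega)}\lesssim\|f\|_{L^1}$ on $W_{p'}(\Omega)$ through a norming functional, and (2) gives $|\langle f,\phi\rangle|\lesssim\|\Ha_\phi\|_{S^p}\|f\|_{L^1}$; both are then extended using the density of Lemma~\ref{sumsdense}. You only make explicit the quantitative form of Nehari's theorem, which the paper takes from \cite[Lemma 2.1]{bampouras2026nehari}.

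Two of your side remarks need correcting, although the proof does not depend on them.

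\emph{The open mapping step.} For $p<\infty$ the map $\psi\mapsto\widehat\psi|_{2\Omega}$ does not send $L^\infty$ into $\{\phi:\Ha_\phi\in S^p\}$. For example, if $\psi(x)=e^{2\pi i\langle x,x_0\rangle}$ with $x_0\in 2\Omega$, then $\widehat{\Ha_\psi f}(x)=\widehat f(x_0-x)$ on $\Omega\cap(x_0-\Omega)$. This is a partial isometry with infinite-dimensional range, hence not compact. So you must apply the open mapping theorem on $\{\psi\in L^\infty:\Ha_\psi\in S^p\}$ with the norm $\|\psi\|_{L^\infty}+\|\Ha_\psi\|_{S^p}$, or use the closed graph theorem instead.

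\emph{The identification step.} It does not amount to injectivity of $\mathcal{W}_{p'}(\Omega)\to\mathcal{S}'$, and that injectivity does not follow from the argument of Lemma~\ref{WpNorm}. In that lemma the element is an honest finite sum $\sum f_jg_j\in L^1$, so vanishing against all test symbols forces it to be $0$. For an abstract element of the completion you would instead need the Hankel operators with test symbols to be weak-$*$ dense among the $S^p$ Hankel operators, which is a separate approximation statement. What the proof actually needs is weaker: the continuous extension $\iota:\PW^1(2\Omega)\to\mathcal{W}_{p'}(\Omega)$ of the identity on $W_{p'}(\Omega)$ exists and satisfies $\langle\phi,\iota f\rangle=\langle\phi,f\rangle$ for test symbols $\phi$. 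Your pairing argument already proves this. Reading $\|f\|_{\mathcal{W}_{p'}(\Omega)}$ as $\|\iota f\|$ is exactly what the paper's ``by density'' means.
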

\begin{proof}
	As observed in the proof of Theorem \ref{hilbertnehari} and \cite[Lemma 2.1]{bampouras2026nehari}, Nehari's theorem for $S^p$ is equivalent to the estimate
	$$|\langle f,\phi\rangle|\lesssim \|\Ha_\phi\|_{S^p}\|f\|_{L^1},\quad f\in\PW^1(2\Omega).$$
	Let us assume that Nehari's analogue holds for Schatten class Hankel operators in $S^p$. Then, by Lemma \ref{dualSp}, for $f\in W_{p'}(\Omega)$
	$$\|f\|_{W_{p'}(\Omega)}=\sup_{\phi\in (W_{p'}(\Omega))'}\frac{|\langle f,\phi\rangle|}{\|\phi\|_{(W_{p'}(\Omega))'}}=\sup_{\phi\in (W_{p'}(\Omega))'}\frac{|\langle f,\phi\rangle|}{\|\Ha_\phi\|_{S^p}}\lesssim \|f\|_{L^1}.$$
	Since the elements of $W_{p'}(\Omega)$ are dense in $\PW^1(2\Omega)$ (Lemma \ref{sumsdense}), we get that
	$$\|f\|_{\mathcal{W}_{p'}(\Omega)}\lesssim  \|f\|_{L^1}.$$
	For the converse, again by Lemma \ref{dualSp}, for $f\in W_{p'}(\Omega)$,
	$$|\langle f,\phi\rangle|\leq \|f\|_{W_{p'}(\Omega)}\|\phi\|_{(W_{p'}(\Omega))'}\lesssim \|f\|_{L^1}\|\Ha_\phi\|_{S^p}.$$ Again by Lemma \ref{sumsdense}, we get that for any $f\in \PW^1(2\Omega)$,
	$$|\langle f,\phi\rangle|\lesssim \|f\|_{L^1}\|\Ha_\phi\|_{S^p}.$$
	The proof is complete.
\end{proof}
Let us notice that, by the proof of Lemma \ref{sumsdense}, for $W_1(\Omega)$ we can ignore the orthogonality condition. Therefore, Nehari's theorem holds if and only if any $f\in \PW^1(2\Omega)$ can be written as $f=\sum_{j=1}^\infty f_j g_j$, where $f_j,g_j$ are any sequences (not necessarily orthogonal) in $\PW(\Omega)$ with $\sum_{j=1}^\infty \|f_j\|_{L^2}\|g_j\|_{L^2}<\infty.$

We finish this paper by observing that Theorem \ref{hilbertnehari} and Proposition \ref{factor} give the following consequence.
\begin{cor}
	Let $\Omega\subset\mathbb{R}^n$ be a convex set that does not contain affine lines. There is a constant $C>0$ such that for any $f\in \PW^1(2\Omega)$, there are $f_j,g_j$ orthogonal sequences in $\PW(\Omega)$ with
	$$f=\lim_{N\to \infty} \sum_{j=1}^N f_j g_j \text{ in $\mathcal{W}_2(\Omega)$,} \quad  \text{and}\quad  \sum_j (\|f_j\|_{L^2}\|g_j\|_{L^2})^2\leq C\|f\|_{L^1}^2.$$
\end{cor}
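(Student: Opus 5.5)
The corollary follows by chaining Theorem \ref{hilbertnehari} with Proposition \ref{factor} at $p=2$, and then reading the conclusion through Proposition \ref{completion}.

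\textbf{Step 1: inequality (2) of Proposition \ref{factor}.} Theorem \ref{hilbertnehari} says that Nehari's theorem holds for $S^2$ on $\PW(\Omega)$. More quantitatively, its proof gives
$$|\langle f,\phi\rangle|\leq C_n\|\Ha_\phi\|_{S^2}\|f\|_{L^1},\qquad f\in\PW^1(2\Omega),$$
with $C_n$ the square root of the constant in Theorem \ref{helson1}. This is exactly the estimate used in the proof of Proposition \ref{factor}. Since $p'=2$, that proposition yields
$$\|f\|_{\mathcal{W}_2(\Omega)}\leq C_n\|f\|_{L^1},\qquad f\in\PW^1(2\Omega).$$
Here $f$ is regarded as an element of $\mathcal{W}_2(\Omega)$ via the density from Lemma \ref{sumsdense}. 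The inequality shows that the identity map on the dense subset $W_2(\Omega)\cap\PW^1(2\Omega)$ extends continuously from $\PW^1(2\Omega)$ into $\mathcal{W}_2(\Omega)$.

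\textbf{Step 2: turning the norm bound into a factorization.} By Proposition \ref{completion} with $p=2<\infty$, the norm $\|f\|_{\mathcal{W}_2(\Omega)}$ equals the infimum of $\big(\sum_j(\|f_j\|_{L^2}\|g_j\|_{L^2})^2\big)^{1/2}$ over orthogonal sequences $f_j,g_j\in\PW(\Omega)$ with $f=\lim_N\sum_{j\le N}f_jg_j$ in $\mathcal{W}_2(\Omega)$.

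If $f\neq0$, choose a representation whose value is at most $2\|f\|_{\mathcal{W}_2(\Omega)}$. The case $f=0$ is trivial, using empty sums. This gives
$$\sum_j(\|f_j\|_{L^2}\|g_j\|_{L^2})^2\leq 4C_n^2\|f\|_{L^1}^2,$$
so the corollary holds with $C=4C_n^2$.

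\textbf{Main obstacle: identifying $f$ with an element of $\mathcal{W}_2(\Omega)$.} The key point is that the element of $\mathcal{W}_2(\Omega)$ obtained by extension in Step 1 is really $f$.

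To see this, take $f^{(k)}\in W_2(\Omega)$ with $f^{(k)}\to f$ in $L^1$, which exists by Lemma \ref{sumsdense}. By the inequality of Step 1, the sequence $(f^{(k)})$ is Cauchy in $\mathcal{W}_2(\Omega)$. Its limit pairs with each test function $\phi\in\mathcal{F}^{-1}C_c^\infty(2\Omega)$ in the same way as $f$ does. This uses two facts:
\begin{itemize}
\item $\langle\cdot,\phi\rangle$ is continuous on $\mathcal{W}_2(\Omega)$, since $\|\Ha_\phi\|_{S^2}<\infty$ for such $\phi$ by Lemma \ref{dualSp};
\item $\langle\cdot,\phi\rangle$ is also continuous in $L^1$.
\end{itemize}
Hence the limit agrees with $f$ as a distribution on $2\Omega$, which is the identification required. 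Beyond this check, the proof is just the composition of the two earlier results.
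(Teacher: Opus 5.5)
Your proposal is correct and is essentially the paper's argument: the paper simply chains Theorem \ref{hilbertnehari} with Proposition \ref{factor} at $p=p'=2$ and reads off the representation from Proposition \ref{completion}, and your identification of $f$ with its image in $\mathcal{W}_2(\Omega)$ makes explicit a point the paper leaves implicit. One harmless slip: since $\|\Ha_\phi\|_{S^2}^2=2^{-n}\int|\widehat{\phi}|^2\omega_{2\Omega}$, the constant from the proof of Theorem \ref{hilbertnehari} is $2^{n/2}$ times the square root of the constant in Theorem \ref{helson1} (applied to $2\Omega$), which only changes the value of $C$.
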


	\bibliographystyle{plain}
	\bibliography{Helson}
	
\end{document}